\DeclareMathOperator*{\argmin}{arg\,min}
\DeclareMathOperator{\diag}{diag}
\newcommand{\grad}[1]{\nabla #1}
\newcommand{\tcol}[1]{\multicolumn{2}{c|}{#1}}
\newcommand{\defq}{\overset{\operatorname{def}}{=}}
\newcommand{\conv}[1]{\operatorname{conv}(#1)}
\newcommand{\eabs}{\epsilon_{\operatorname{abs}}}
\newcommand{\erel}{\epsilon_{\operatorname{rel}}}
\newcommand{\TheTitle}{An optimal lower bound for smooth convex functions}
\newcommand{\TheAuthors}{Mihai I. Florea and Yurii Nesterov}
\newcommand{\TheKeywords}{smoothing, bundle, memory, acceleration, adaptive, primal-dual, optimized gradient method}
\title{\TheTitle}
\author{Mihai I. Florea\thanks{Department of Mathematical Engineering (INMA), Catholic University of Louvain (UCL), Belgium; Department of Electronics and Computers, Transilvania University of Bra\c{s}ov, Romania. E-mail: \email{mihai.florea@uclouvain.be}}. \and Yurii Nesterov\thanks{Center for Operations Research and Econometrics (CORE), UCL. E-mail: \email{yurii.nesterov@uclouvain.be}}.
This project has received funding from the European Research Council (ERC) under the European Union's Horizon 2020 research and innovation programme (grant agreement No. 788368).}
\headers{\TheTitle}{\TheAuthors}
\begin{document}

\maketitle

\begin{abstract}
First order methods endowed with global convergence guarantees operate using global lower bounds on the objective. The tightening of the bounds has been shown to increase both the theoretical guarantees and the practical performance. In this work, we define a global lower bound for smooth differentiable objectives that is optimal with respect to the collected oracle information. The bound can be readily employed by the Gradient Method with Memory to improve its performance. Further using the machinery underlying the optimal bounds, we introduce a modified version of the estimate sequence that we use to construct an Optimized Gradient Method with Memory possessing the best known convergence guarantees for its class of algorithms, even in terms of the proportionality constant. We additionally equip the method with an adaptive convergence guarantee adjustment procedure that is an effective replacement for line-search. Simulation results on synthetic but otherwise difficult smooth problems validate the theoretical properties of the bound and proposed methods.
\end{abstract}

\section{Introduction}

The global minimum of an objective function can be reliably approximated only if the function exhibits some global property. One such property is convexity and it can be defined as the existence at every point of a supporting hyperplane on the entire function graph. This global lower bound on the objective is determined only by the first-order oracle information at that point: the gradient and the function value. For smooth objectives, the Gradient Method (GM) queries the oracle at the current iterate and constructs the corresponding supporting hyperplane using this information. The estimate function manages to accelerate GM by incorporating an accumulated hyperplane lower bound that is generally tighter than the GM lower bound at the optimum. Further building on this concept, the Gradient Methods with Memory (GMM) \cite{ref_021,ref_007,ref_008} construct a piece-wise linear lower bound as the maximum between all hyperplanes generated by the oracle information stored in memory. When the GMMs store all past information, the piece-wise linear bound is tighter than any weighted average of the constituent pieces, such as the hyperplane contained in the estimate function. This improvement raises the question of what is the tightest lower bound on a smooth objective function that we can construct based on the available oracle information.

Our answer consists in an interpolating global lower bound with provable optimality. This bound constitutes a new object in convex analysis, exhibiting both primal and dual characteristics. We elaborate on its remarkable properties.

We show how the memory footprint of our bound can be reduced while preserving its basic properties. This reduced bound is compatible with the mechanics of GMM and we use it to construct an Improved Gradient Method with Memory (IGMM) employing a tighter lower model than the bundle of GMM. However, the increased accuracy of the model does not translate into an increased worst-case rate.

Next, we study how our bound can be employed by accelerated schemes, with particular focus on how it can lead to improved theoretical convergence guarantees. For instance, the previously introduced Efficient Accelerated Gradient Method with Memory (AGMM)~\cite{ref_006,ref_008} takes advantage of the increased tightness of a piece-wise linear lower bound by dynamically adjusting the convergence guarantees. While the guarantees of AGMM are improved at runtime, its model does not allow significantly higher worst-case rates.

The work in \cite{ref_004, ref_012, ref_002} has shown that the proportionality constant in the worst-case rate of the Fast Gradient Method~\cite{ref_017} can be improved by a factor of 2. This may be the highest level available to a black-box method, as argued in \cite{ref_013} and partially supported by \cite{ref_003}[Theorem 3]. The analysis in \cite{ref_002} involves potential (Lyapunov) functions and closely resembles the pattern introduced in \cite{ref_005,ref_009,ref_010} while additionally utilizing the fact that the gradient of a smooth unconstrained objective at the optimum is zero. The analysis relies on the Performance Estimation Framework~\cite{ref_004} to provide the update rules for certain quantities present in the potential functions. However, these do not appear to have a definitive meaning \cite{ref_002}. Moreover, the lack of an estimate function formulation precludes the use of a convergence guarantee adjustment procedure at runtime, such as the ones proposed in \cite{ref_006,ref_008,ref_015}. More generally, the opaque nature of the potential function quantities hinders the development of any adaptive mechanism.

Building on the structure of the optimal bounds, we propose the framework of the primal-dual estimate functions (PDEF), a generalization of the original FGM estimate functions described in \cite{ref_018}. The PDEFs allow the creation of an Optimized Gradient Method with Memory (OGMM) with the worst-case rate also increased to the highest known level for its class of algorithms. The estimate function updates are straightforward and the estimate function optima allow for an adaptive increase of the convergence guarantees at runtime, beyond the worst-case ones. Augmentation, as proposed in \cite{ref_010}, leads to the potential functions described in \cite{ref_002}, in the process explaining the mechanism underlying the update rules and the meaning of the constituent quantities.

Preliminary simulation results on synthetic but difficult smooth problems confirm the superiority of our bound and of the methods employing it, either directly as IGMM or in primal-dual form as OGMM.

\subsection{Problem setup and notation}

We operate over the $n$-dimensional real vector space $\mathbb{E}$. We denote by $\mathbb{E}^*$ its dual space, the space of linear functions $s$ of value $\langle s, x \rangle$ for all $x \in \mathbb{E}$. The space $\mathbb{E}$ is endowed with the Euclidean norm $\| . \|$ defined as
\begin{equation}
\| x \| = \sqrt{\langle B x, x \rangle }, \quad x \in \mathbb{E},
\end{equation}
where the symmetric positive definite linear operator $B$ maps $\mathbb{E}$ to $\mathbb{E}^*$.
The dual of this norm, denoted as $\| . \|_*$, is consequently given by
\begin{equation}
\| g \|_* = \sqrt{\langle g, B^{-1} g \rangle }, \quad g \in \mathbb{E}^*.
\end{equation}

We seek to solve the following unconstrained smooth optimization problem:
\begin{equation} \label{label_001}
\min_{x \in \mathbb{E}} f(x) .
\end{equation}
Here, $f$ is differentiable over the entire $\mathbb{E}$ and convex. The gradient of $f$, $\grad{f}$, is Lipschitz continuous with Lipschitz constant $L_f > 0$. We further assume that the optimization problem in \eqref{label_001} contains a non-empty set of solutions $X^*$.

We consider that an optimization method has queried and stored in memory at a certain stage all oracle information pertaining to $m$ points $z_i \in \mathbb{E}$, $i = 1, ..., m$. We denote by $\mathcal{Z}$ the set of all points $z_i$ and by $\mathcal{I}_{\mathcal{Z}}$ the set of all oracle information arising from $\mathcal{Z}$, given by $\mathcal{I}_{\mathcal{Z}} \defq \{ (z_i, f_i, g_i) \ | \ i = 1, ..., m \}$, where $f_i = f(z_i)$ and $g_i = \grad{f}(z_i)$ for all $i = 1, ..., m$. We assume that we do not know anything else about function $f$.

All the above information can be used to narrow down the class of functions $f$ belongs to. We denote this restricted class by $\mathcal{S}_{\mathbb{E}, L_f}(\mathcal{I}_{\mathcal{Z}})$, where $\hat{f} \in \mathcal{S}_{\mathbb{E}, L_f}(\mathcal{I}_{\mathcal{Z}})$ if and only if all of the following hold:
\begin{enumerate}
\item $\hat{f}$ is convex and differentiable on $\mathbb{E}$;
\item $\grad{\hat{f}}$ is Lipschitz continuous with constant $L_f$;
\item $\hat{f}(z_i) = f_i$ and $\grad{\hat{f}}(z_i) = g_i$ for all $i = 1, ..., m$.
\end{enumerate}

\section{An interpolating lower bound} \label{label_002}

With the function class defined, it remains to determine its smallest member, if it exists. We start by constructing simple lower bounds, gradually increasing their complexity until we obtain a clear answer.

\subsection{Primal-dual global bounds} \label{label_003}

We consider an arbitrary $\hat{f} \in \mathcal{S}_{\mathbb{E}, L_f}(\mathcal{I}_{\mathcal{Z}})$. The structure of $\hat{f}$ allows us to define a set of global bounds that are independent of $\hat{f}$ itself.
First, each supporting hyperplane $l_i$ at point $z_i$ and the combined piece-wise linear \emph{lower} model $l$ are, respectively, given by
\begin{align}
l_i(y) &\defq f_i + \langle g_i, y - z_i \rangle, \quad y \in \mathbb{E}, \quad i = 1, ..., m, \\
l(y) &\defq \displaystyle \max_{i = 1, ..., m} l_i(y), \quad y \in \mathbb{E} .
\end{align}
The convexity of $\hat{f}$ implies that
\begin{equation} \label{label_004}
\hat{f}(y) \geq l(y) \geq l_i(y), \quad y \in \mathbb{E}, \quad i = 1, ..., m .
\end{equation}
The Lipschitz property of $\grad{\hat{f}}$ yields parabolic\footnote{We define parabolae as quadratic functions whose Hessian is a positive multiple of $B$.} \emph{upper} bounds at $z_i$, denoted as
\begin{equation} \label{label_005}
\Psi_i(y) \defq l_i(y) + \frac{L_f}{2} \| y - z_i \|^2 \geq \hat{f}(y), \quad y \in \mathbb{E}, \quad i = 1, ..., m .
\end{equation}
To obtain bounds that are tighter than the aforementioned ones, which happen to operate only in the primal space $\mathbb{E}$, we consider bounds containing both primal and dual information. We thus define functions $\phi_i$ and $\phi$ as
\begin{align}
\phi_i(y, g) &\defq l_i(y) + \frac{1}{2 L_f} \| g - g_i \|_*^2, \quad y \in \mathbb{E}, \quad g \in \mathbb{E}^*, \quad i = 1, ..., m, \label{label_006} \\
\phi(y, g) &\defq \max_{i = 1, ..., m} \phi_i(y, g), \quad y \in \mathbb{E}, \quad g \in \mathbb{E}^*. \label{label_007}
\end{align}
\begin{proposition} \label{label_008}
Function $\phi$ is convex in $y$, strongly convex in $g$. It also dominates the lower model $l$ for any value of $g$, namely
\begin{equation} \nonumber
\phi(y, g) \geq l(y), \quad y \in \mathbb{E}, \quad g \in \mathbb{E}^*.
\end{equation}
\end{proposition}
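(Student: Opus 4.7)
The plan is to dismantle $\phi$ into its building blocks $\phi_i$, establish the required properties for each piece, and then show that the componentwise maximum preserves them.

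First, I would analyze each $\phi_i$ separately. Since $l_i$ is affine in $y$ (and constant in $g$), it is trivially convex in $y$. The second term $\frac{1}{2L_f}\|g - g_i\|_*^2$ is constant in $y$ and, as a scaled squared dual norm, is strongly convex in $g$ with parameter $1/L_f$; indeed, $\|g\|_*^2 = \langle g, B^{-1} g \rangle$ is a positive definite quadratic form with Hessian $2 B^{-1}$, so the inequality $\frac{1}{2L_f}\|\lambda g_1 + (1-\lambda) g_2 - g_i\|_*^2 \leq \lambda \frac{1}{2L_f}\|g_1 - g_i\|_*^2 + (1-\lambda)\frac{1}{2L_f}\|g_2 - g_i\|_*^2 - \frac{1}{2L_f}\lambda(1-\lambda)\|g_1 - g_2\|_*^2$ follows by direct expansion. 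Consequently each $\phi_i$ is convex in $y$ and strongly convex in $g$ with parameter $1/L_f$.

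Next, I would lift these properties through the max. Convexity in $y$ is immediate: the pointwise maximum of convex functions is convex. For the strong convexity in $g$, I would argue as follows: for any $g_1, g_2 \in \mathbb{E}^*$ and $\lambda \in [0,1]$, pick an index $j$ attaining the maximum at $\lambda g_1 + (1-\lambda) g_2$ (for fixed $y$); using the strong convexity of $\phi_j$ and the bound $\phi_j(y, g_k) \leq \phi(y, g_k)$ for $k = 1,2$, one obtains
\begin{equation} \nonumber
\phi(y, \lambda g_1 + (1-\lambda) g_2) \leq \lambda \phi(y, g_1) + (1-\lambda) \phi(y, g_2) - \frac{1}{2 L_f} \lambda (1-\lambda) \|g_1 - g_2\|_*^2,
\end{equation}
so $\phi$ inherits strong convexity in $g$ with the same parameter $1/L_f$.

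Finally, dominance of $l$ is just the observation that the quadratic term is nonnegative: for every $i$, $\phi_i(y, g) \geq l_i(y)$, so taking the max over $i$ yields $\phi(y, g) \geq \max_i l_i(y) = l(y)$. There is no real obstacle here; the only point that requires a moment of care is verifying that the pointwise max preserves strong convexity with the same parameter, which is the trick of selecting the maximizing index at the convex combination rather than at the extreme points.
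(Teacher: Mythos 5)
Your proposal is correct and follows essentially the same route as the paper's proof: each $\phi_i$ is convex in $y$ and $1/L_f$-strongly convex in $g$, the pointwise maximum preserves both properties, and the dominance of $l$ follows by dropping the nonnegative quadratic term in each $\phi_i$ and maximizing over $i$. The only difference is that you spell out the (standard) argument that the max of strongly convex functions is strongly convex with the same modulus, which the paper leaves implicit.
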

\begin{proof}
Function $\phi$ is the maximum of functions $\phi_i$ that are convex in $y$ and strongly convex in $g$.
From \eqref{label_006} we have that $\phi_i(y, g) \geq l_i(y)$, $i = 1, ..., m$, $y \in \mathbb{E}$ and $g \in \mathbb{E}^*$. Taking the maximum over all $i$ gives our result.
\end{proof}

\begin{proposition} \label{label_009}
Function $\phi$ can be used to construct a global lower bound on any function $\hat{f} \in \mathcal{S}_{\mathbb{E}, L_f}(\mathcal{I}_{\mathcal{Z}})$ as
\begin{equation} \nonumber
\hat{f}(y) \geq \phi(y, \grad{\hat{f}}(y)), \quad y \in \mathbb{E}.
\end{equation}
\end{proposition}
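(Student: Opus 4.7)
The goal is to upgrade, for each stored index $i$, the plain supporting hyperplane inequality $\hat{f}(y) \geq l_i(y)$ into its strengthened smooth-convex form
\[
\hat{f}(y) \;\geq\; l_i(y) + \frac{1}{2 L_f} \| \grad{\hat{f}}(y) - g_i \|_*^2 \;=\; \phi_i(y, \grad{\hat{f}}(y)),
\]
after which the claim follows simply by taking the maximum over $i = 1, \dots, m$ on the right-hand side and recalling the definition \eqref{label_007} of $\phi$.

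My plan is to derive the displayed inequality directly from the descent lemma applied to the auxiliary function $h_i(u) \defq \hat{f}(u) - \langle g_i, u \rangle$, for arbitrary fixed $y \in \mathbb{E}$ and $i \in \{1, \dots, m\}$. Since $\hat{f} \in \mathcal{S}_{\mathbb{E}, L_f}(\mathcal{I}_{\mathcal{Z}})$, the function $h_i$ is convex, has an $L_f$-Lipschitz gradient, and satisfies $\grad h_i(z_i) = g_i - g_i = 0$, so $z_i$ is a global minimizer of $h_i$. The descent lemma therefore yields, for every $v \in \mathbb{E}$,
\[
h_i(z_i) \;\leq\; h_i(v) \;\leq\; h_i(y) + \langle \grad h_i(y), v - y \rangle + \tfrac{L_f}{2} \| v - y \|^2 .
\]

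The second step is to minimize the right-hand side in $v$, which gives $v = y - \tfrac{1}{L_f} B^{-1} \grad h_i(y)$ and the sharp bound
\[
h_i(z_i) \;\leq\; h_i(y) - \tfrac{1}{2 L_f} \| \grad h_i(y) \|_*^2 .
\]
Substituting back $h_i(u) = \hat{f}(u) - \langle g_i, u \rangle$ and $\grad h_i(y) = \grad \hat{f}(y) - g_i$, and rearranging, I recover exactly $\hat{f}(y) \geq f_i + \langle g_i, y - z_i \rangle + \tfrac{1}{2 L_f} \| \grad \hat{f}(y) - g_i \|_*^2 = \phi_i(y, \grad \hat{f}(y))$. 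Since this holds for each $i$, taking the maximum over $i$ yields the stated bound $\hat{f}(y) \geq \phi(y, \grad \hat{f}(y))$.

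The only non-routine ingredient is the strengthened gradient inequality, but that is a classical Nesterov-type refinement whose derivation, as sketched above, is essentially one application of the descent lemma plus an exact minimization step. Beyond that, no real obstacle is expected; in particular, no interpolation argument on the class $\mathcal{S}_{\mathbb{E}, L_f}(\mathcal{I}_{\mathcal{Z}})$ is required here, since the bound is obtained pointwise in $i$ from the intrinsic smoothness and convexity of $\hat{f}$ and is only afterwards combined via the maximum into $\phi$.
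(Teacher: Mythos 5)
Your proof is correct and follows essentially the same route as the paper: your auxiliary function $h_i(u) = \hat{f}(u) - \langle g_i, u \rangle$ is exactly the paper's tilted function $\theta_{z}$ with $z = z_i$, and both arguments obtain $\hat{f}(y) \geq \phi_i(y, \grad{\hat{f}}(y))$ by noting $z_i$ minimizes the tilted function and then minimizing the descent-lemma upper bound exactly, before taking the maximum over $i$.
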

\begin{proof}
Adding and subtracting terms in \eqref{label_005} using an arbitrary $z \in \mathbb{E}$ yields
\begin{equation} \label{label_010}
\hat{f}(x) - \langle \grad{\hat{f}}(z), x \rangle \leq \hat{f}(y) - \langle \grad{\hat{f}}(z), y \rangle + \langle \grad{\hat{f}}(y) - \grad{\hat{f}}(z), x - y \rangle + \frac{L_f}{2} \| x - y \|^2,
\end{equation}
for all $x, y \in \mathbb{E}$. We define around $z$ the function $\theta_z(x) = \hat{f}(x) - \langle \grad{\hat{f}}(z), x \rangle$, $x \in \mathbb{E}$. With this notation, \eqref{label_010} becomes the $L_f$ smoothness condition for $\theta_z$, written as
\begin{equation}
\theta_z(x) \leq \theta_z(y) + \langle \grad{\theta_z}(y), x - y \rangle + \frac{L_f}{2} \| x - y \|^2, \quad x, y \in \mathbb{E}.
\end{equation}
The first-order optimality condition for $\theta_z$ implies that the global optimum of $\theta_z$ is attained at $x^* = z$ and we have that
\begin{align}
\theta_z(z) &= \min_{x \in \mathbb{E}} \theta_z(x) \leq \min_{x \in \mathbb{E}} \left\{\theta_z(y) + \langle \grad{\theta_z}(y), x - y \rangle + \frac{L_f}{2} \| x - y \|^2 \right\} \nonumber \\
&= \theta_z(y) - \frac{1}{2 L_f} \| \grad{\theta_z}(y) \|_*^2, \quad y \in \mathbb{E}. \label{label_011}
\end{align}
Considering that $z$ is arbitrary, setting it to $z_i$ and expanding the $\theta_z$ terms accordingly in \eqref{label_011} yields
\begin{equation}
\hat{f}(y) \geq \phi_i(y, \grad{\hat{f}}(y)), \quad y \in \mathbb{E}, \quad i = 1, ..., m ,
\end{equation}
which completes the proof. The above results build on \cite[Theorem 2.1.15]{ref_018}.
\end{proof}

\subsection{Definition and oracle functions} \label{label_012}

Whereas $l$ was the lower bound employed by the original Gradient Methods with Memory~\cite{ref_021}, the primal-dual object in \eqref{label_007} along with Proposition~\ref{label_009} suggest that we can construct a smooth function $\hat{f}_{\operatorname{min}} \in \mathcal{S}_{\mathbb{E}, L_f}(\mathcal{I}_{\mathcal{Z}})$, hence larger than $l$, that lower bounds \emph{any} $\hat{f} \in \mathcal{S}_{\mathbb{E}, L_f}(\mathcal{I}_{\mathcal{Z}})$.

Let function $p$ be defined by taking the minimum over the variable $g$ in \eqref{label_007} namely
\begin{equation} \label{label_013}
p(y) \defq \displaystyle \min_{g \in \mathbb{E}^*} \phi(y, g), \quad y \in \mathbb{E}.
\end{equation}
Proposition~\ref{label_008} implies that $p$ is well defined over the entire $\mathbb{E}$, is convex and has a unique optimal $g$ in \eqref{label_013} for every $y \in \mathbb{E}$. We denote this optimum as
\begin{equation} \label{label_014}
g^*(y) \defq \displaystyle \argmin_{g \in \mathbb{E}^*} \phi(y, g), \quad y \in \mathbb{E}.
\end{equation}
We can compute the values of $p(y)$ and $g^*(y)$ using the fact that
\begin{equation}
\phi(y, g) = \displaystyle \max_{\lambda \in \Delta_m} \displaystyle \sum_{i = 1}^{m} \lambda_i \left( f_i + \langle g_i, y - z_i \rangle + \frac{1}{2 L_f} \| g - g_i \|_*^2 \right) , \quad y \in \mathbb{E}, \quad g \in \mathbb{E}^*,
\end{equation}
where $\Delta_m$ is the standard simplex in $m$ dimensions. Let $f_*, d_{g} \in \mathbb{R}^m$ with
\begin{equation}
(f_*)_i \defq \langle g_i, z_i \rangle - f_i, \quad (d_{g})_i \defq \| g_i \|_*^2, \quad i = 1, ..., m,
\end{equation}
and $G \in \mathbb{E}^* \times \mathbb{R}^m$ with $G \defq (g_1, g_2, ..., g_{m})$.

Using the above notation we have that $p$ is given for all $y \in \mathbb{E}$ by
\begin{equation}
p(y) = \displaystyle \min_{g \in \mathbb{E}^*} \max_{\lambda \in \Delta_m} \langle \lambda, G^T y - f_* \rangle + \frac{1}{2 L_f} \|g\|_*^2 - \frac{1}{L_f} \langle \lambda, G^T B^{-1} g \rangle + \frac{1}{2 L_f} \langle \lambda, d_{g} \rangle.
\end{equation}
Strong duality holds in this case and we have for all $y \in \mathbb{E}$ that
\begin{equation} \label{label_015}
p(y) = \max_{\lambda \in \Delta_m} \left\langle \lambda, G^T y - f_* + \frac{1}{2 L_f} d_{g} \right\rangle + \frac{1}{L_f} \left( \min_{g \in \mathbb{E}^*} \frac{1}{2} \| g \|_*^2 - \langle G \lambda, B^{-1} g \rangle \right).
\end{equation}
The optimum for the inner minimization problem is reached when
\begin{equation} \label{label_016}
g = G \lambda .
\end{equation}
We can use \eqref{label_016} to eliminate variable $g$ from \eqref{label_015} and obtain
\begin{equation} \label{label_017}
p(y) = \max_{\lambda \in \Delta_m} \left\{ \rho(y, \lambda) \defq \left\langle \lambda, G^T y - f_* + \frac{1}{2 L_f} d_{g} \right\rangle - \frac{1}{2 L_f} \| G \lambda \|_*^2 \right\} , \quad y \in \mathbb{E}.
\end{equation}
Let the optimal point set of problem~\eqref{label_017} be given by
\begin{equation}
\Lambda^*(y) \defq \left\{ \lambda^* \in \mathbb{R}^m \ \middle| \ \rho(y, \lambda^*) = p(y) \right\} , \quad y \in \mathbb{E}.
\end{equation}

Based on the equivalence between the two expressions for $p$ in \eqref{label_013} and \eqref{label_017}, respectively, we state the following result.

\begin{proposition} \label{label_018}
Function $p$ is differentiable, with the gradient given by
\begin{equation} \label{label_019}
\grad{p}(y) = g^*(y) = G \lambda^*, \quad \lambda^* \in \Lambda^*(y) , \quad y \in \mathbb{E}.
\end{equation}
\end{proposition}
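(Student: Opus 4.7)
My plan is to exploit the two equivalent expressions for $p$ in \eqref{label_013} and \eqref{label_017} in complementary ways: the max-form supplies differentiability via an envelope argument, while the min-form supplies uniqueness of the relevant gradient via strong convexity.

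First, I would invoke the max representation \eqref{label_017}. For every fixed $\lambda \in \Delta_m$, the function $y \mapsto \rho(y,\lambda)$ is affine, with $\nabla_y \rho(y,\lambda) = G\lambda$, and $\rho$ is jointly continuous on $\mathbb{E}\times\Delta_m$ with $\Delta_m$ compact. Danskin's theorem then yields that $p$ is convex with directional derivatives in every direction and
\begin{equation}
\partial p(y) \;=\; \operatorname{conv}\{\,G\lambda^* \;:\; \lambda^* \in \Lambda^*(y)\,\}, \qquad y \in \mathbb{E}.
\end{equation}
At this stage, however, differentiability is not yet clear, because $\Lambda^*(y)$ can fail to be a singleton whenever the columns of $G$ are linearly dependent.

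Next, I would use the min representation \eqref{label_013} to show that the image $G\Lambda^*(y)$ collapses to a single point. By Proposition~\ref{label_008}, $\phi(y,\cdot)$ is strongly convex, so the minimizer $g^*(y)$ in \eqref{label_014} is unique. Strong duality, already invoked to derive \eqref{label_015}--\eqref{label_017}, identifies the saddle points of the min--max problem: for every $\lambda^* \in \Lambda^*(y)$, the pair $(G\lambda^*,\lambda^*)$ is a saddle point, by the inner optimality condition \eqref{label_016}. Since the primal component of every saddle point must equal the unique inner minimizer $g^*(y)$, we conclude that $G\lambda^* = g^*(y)$ for all $\lambda^* \in \Lambda^*(y)$.

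Combining the two steps, $\partial p(y) = \{g^*(y)\}$ is a singleton, which implies that $p$ is differentiable at $y$ with $\nabla p(y) = g^*(y) = G\lambda^*$, independently of the particular dual optimum chosen, yielding \eqref{label_019}. The main obstacle in this argument is the potential multiplicity of dual optima $\lambda^*$, and the key observation that dissolves it is that, although $\Lambda^*(y)$ may be a large face of $\Delta_m$, its $G$-image is pinned down by the uniqueness of the primal minimizer of the strongly convex inner problem.
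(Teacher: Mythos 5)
Your proposal is correct and follows essentially the same route as the paper: Danskin's theorem applied to the max form \eqref{label_017} gives $\partial p(y)$ as (the convex hull of) $\{G\lambda^* : \lambda^*\in\Lambda^*(y)\}$, and the strong duality/saddle-point argument together with the uniqueness of the strongly convex inner minimizer $g^*(y)$ collapses this set to a singleton. Your treatment is, if anything, slightly more explicit than the paper's about why $G\Lambda^*(y)$ is a single point when $\Lambda^*(y)$ is not.
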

\begin{proof}
By applying Danskin's theorem~\cite{ref_001} in \eqref{label_017} we obtain an expression for the subdifferential of $p$ at $y$ in the form of
\begin{equation} \label{label_020}
\partial p(y) = \left\{ \frac{\partial \phi}{\partial y}(y, \lambda^*) \ \middle| \ \lambda^* \in \Lambda^*(y) \right\} = \left\{G \lambda^* \ \middle| \ \lambda^* \in \Lambda^*(y) \right\} , \quad y \in \mathbb{E}.
\end{equation}
Recall that we have performed in \eqref{label_017} the variable change in \eqref{label_016}. Therefore, $g^* = G \lambda^*$ is optimal in \eqref{label_013} for any $\lambda^*$ in $\Lambda^*(y)$. However, we have established that $g^*$ is unique and given by \eqref{label_014}. Taking also into account the subdifferential expression in \eqref{label_020}, we get the desired result.
\end{proof}

\subsection{Fundamental properties} First, we study how $p$ relates to $\hat{f}_{\operatorname{min}}$.

\begin{proposition} \label{label_021}
Function $p$ is convex and lies between $l$ and any $\hat{f} \in \mathcal{S}_{\mathbb{E}, L_f}(\mathcal{I}_{\mathcal{Z}})$, namely
\begin{equation} \nonumber
l(y) \leq p(y) \leq \hat{f}(y), \quad y \in \mathbb{E}.
\end{equation}
\end{proposition}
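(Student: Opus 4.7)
The plan is to derive all three claims directly from the two propositions already established in the subsection, so no new machinery is required.

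For convexity of $p$, I would first observe that each $\phi_i$ defined in \eqref{label_006} is \emph{jointly} convex in $(y,g)$: the term $l_i(y)$ is affine in $y$ and the term $\tfrac{1}{2L_f}\|g-g_i\|_*^2$ is convex in $g$, so their sum is jointly convex. Hence $\phi = \max_i \phi_i$ is jointly convex in $(y,g)$. Since partial minimization of a jointly convex function over one of its arguments yields a convex function in the remaining argument, and Proposition~\ref{label_008} already guarantees the minimum in \eqref{label_013} is attained for every $y$, I would conclude that $p$ is convex on $\mathbb{E}$.

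For the lower inequality $l(y) \leq p(y)$, I would simply invoke Proposition~\ref{label_008}, which states $\phi(y,g) \geq l(y)$ for every $g \in \mathbb{E}^*$. Minimizing the left-hand side over $g$ preserves the inequality and produces exactly $p(y) \geq l(y)$.

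For the upper inequality $p(y) \leq \hat{f}(y)$, I would use Proposition~\ref{label_009}, which gives $\hat{f}(y) \geq \phi(y, \nabla \hat{f}(y))$. Since $p(y)$ is obtained by minimizing $\phi(y,\cdot)$ over all dual vectors, the particular choice $g = \nabla \hat{f}(y)$ yields $\phi(y, \nabla \hat{f}(y)) \geq p(y)$, and chaining the two inequalities gives the claim. There is no real obstacle here: the whole proposition is a direct bookkeeping consequence of the two preceding results together with the joint-convexity observation on $\phi$.
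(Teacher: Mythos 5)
Your proof is correct and follows essentially the same route as the paper: the lower inequality by minimizing Proposition~\ref{label_008}'s bound over $g$, and the upper inequality by chaining Proposition~\ref{label_009} with the suboptimality of $g = \grad{\hat{f}}(y)$ in the minimization defining $p$. Your convexity argument via joint convexity of $\phi$ and partial minimization is in fact slightly more explicit than the paper's one-line appeal to Proposition~\ref{label_008}, which only records separate convexity in $y$ and $g$; this is a welcome clarification but not a different approach.
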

\begin{proof}
Proposition~\ref{label_008} gives the convexity of $p$ and the first inequality.
Likewise, from Proposition~\ref{label_009} we have that
\begin{equation}
\hat{f}(y) \geq \phi(y, \grad{\hat{f}}(y)) \geq \min_{g \in \mathbb{E}^*} \phi(y, g) = p(y) , \quad y \in \mathbb{E}.
\end{equation}
\end{proof}

\begin{proposition} \label{label_022}
The function $p$ has an $L_f$-Lipschitz continuous gradient.
\end{proposition}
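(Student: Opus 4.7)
The plan is to establish $L_f$-Lipschitz continuity of $\nabla p$ by proving the stronger cocoercivity inequality
\[
\tfrac{1}{L_f}\|\nabla p(y_1)-\nabla p(y_2)\|_*^2 \;\leq\; \langle y_1-y_2,\,\nabla p(y_1)-\nabla p(y_2)\rangle,
\]
from which Cauchy--Schwarz immediately yields the claim. The starting point is Proposition~\ref{label_018}, which tells us that $\nabla p(y)=G\lambda^*$ for any $\lambda^*\in\Lambda^*(y)$, so I only need to control how the quantity $G\lambda^*$ varies with $y$.

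For any two points $y_1,y_2\in\mathbb{E}$, I pick $\lambda_1^*\in\Lambda^*(y_1)$ and $\lambda_2^*\in\Lambda^*(y_2)$. Since $\lambda_j^*$ maximizes the concave function $\rho(y_j,\cdot)$ over the compact convex set $\Delta_m$, the first-order optimality condition gives the variational inequality $\langle \nabla_\lambda\rho(y_j,\lambda_j^*),\,\mu-\lambda_j^*\rangle\leq 0$ for every $\mu\in\Delta_m$. From \eqref{label_017} one has
\[
\nabla_\lambda\rho(y,\lambda)\;=\;G^Ty-f_*+\tfrac{1}{2L_f}d_g-\tfrac{1}{L_f}G^TB^{-1}G\lambda.
\]
Testing the inequality at $y_1$ against $\mu=\lambda_2^*$, testing the inequality at $y_2$ against $\mu=\lambda_1^*$, and adding, the constant terms $-f_*+\tfrac{1}{2L_f}d_g$ cancel and I obtain
\[
\langle G^T(y_1-y_2),\,\lambda_2^*-\lambda_1^*\rangle \;-\; \tfrac{1}{L_f}\langle G^TB^{-1}G(\lambda_1^*-\lambda_2^*),\,\lambda_2^*-\lambda_1^*\rangle \;\leq\; 0.
\]
Recognizing $\langle G^TB^{-1}G(\lambda_1^*-\lambda_2^*),\lambda_1^*-\lambda_2^*\rangle=\|G\lambda_1^*-G\lambda_2^*\|_*^2$ and $\langle G^T(y_1-y_2),\lambda_1^*-\lambda_2^*\rangle=\langle y_1-y_2,G\lambda_1^*-G\lambda_2^*\rangle$, and using Proposition~\ref{label_018} to identify $G\lambda_j^*=\nabla p(y_j)$, this rearranges into precisely the desired cocoercivity inequality.

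The final step is routine: combining cocoercivity with Cauchy--Schwarz on the right-hand side yields $\|\nabla p(y_1)-\nabla p(y_2)\|_*\leq L_f\|y_1-y_2\|$. The only delicate point to flag is that $\lambda^*(y)$ need not be unique, but this causes no trouble: the argument only uses that $\lambda_1^*$ and $\lambda_2^*$ are each feasible points of $\Delta_m$ and each optimal at its respective $y$, while Proposition~\ref{label_018} guarantees that the resulting $G\lambda_j^*$ is the (well-defined) gradient $\nabla p(y_j)$. The main obstacle I would anticipate is exactly this care with non-uniqueness, together with correctly handling the constraint $\lambda\in\Delta_m$ via the variational form of optimality rather than setting $\nabla_\lambda\rho=0$.
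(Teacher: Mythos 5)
Your proof is correct, and it takes a genuinely different route from the paper. The paper works entirely on the dual side: it computes the Fenchel conjugate $p_*$, shows that $p_*$ equals $\frac{1}{2L_f}\|\cdot\|_*^2$ plus a convex function $\bar{p}_*$ on $\conv{G}$ (and $+\infty$ outside), concludes that $p_*$ is $1/L_f$-strongly convex, and then invokes the standard duality between strong convexity of the conjugate and Lipschitz smoothness of the function (citing \cite{ref_019}). You instead prove cocoercivity of $\grad{p}$ directly from the variational inequalities satisfied by the optimal multipliers of the concave quadratic maximization in \eqref{label_017}: testing each optimality condition against the other point's multiplier, summing, and cancelling the constant terms yields $\frac{1}{L_f}\|G\lambda_1^*-G\lambda_2^*\|_*^2\leq\langle y_1-y_2,\,G\lambda_1^*-G\lambda_2^*\rangle$, which with Proposition~\ref{label_018} and Cauchy--Schwarz gives the claim; your handling of non-uniqueness of $\lambda^*$ is exactly right, since Proposition~\ref{label_018} guarantees $G\lambda^*$ is the same for every optimal multiplier. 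Your argument is more elementary and self-contained (it needs neither conjugate calculus nor the external theorem, only the earlier Proposition~\ref{label_018}), and it delivers the slightly stronger cocoercivity inequality for free. What the paper's route buys in exchange is explicit structural information about $p_*$ that is reused later in the text: the decomposition introduces $\bar{f}_*$, which carries over to Subsection~\ref{label_040}, and it shows that the range of $\grad{p}$ is contained in $\conv{G}$.
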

\begin{proof}
Let $p_*: \mathbb{E}^* \rightarrow \mathbb{E}$ be the convex conjugate (Fenchel dual)~\cite{ref_023} of $p$, given by
\begin{equation}
p_*(s) = \displaystyle \max_{y \in \mathbb{E}} \{ \langle s, y \rangle - p(y) \}, \quad s \in \mathbb{E}^*.
\end{equation}
From the dual formulation of $p$ in \eqref{label_015} we have that strong duality holds and thus
\begin{equation} \label{label_023}
\begin{aligned}
p_*(s) &= \displaystyle \max_{y \in \mathbb{E}} \left\{ \langle s, y \rangle - \displaystyle \max_{\lambda \in \Delta_m} \phi(y, \lambda) \} \right\} = \displaystyle \min_{\lambda \in \Delta_m} \max_{y \in \mathbb{E}} \left\{ \langle s, y \rangle - \phi(y, \lambda) \right\} \\
& = \displaystyle \min_{\lambda \in \Delta_m} \left\{ \max_{y \in \mathbb{E}} \left\{ \langle s - G \lambda, y \rangle \right \} + \langle \bar{f}_*, \lambda \rangle + \frac{1}{2 L_f} \| G \lambda \|_*^2 \right \}, \quad s \in \mathbb{E}^*,
\end{aligned}
\end{equation}
where $\bar{f}_* \defq f_* - \frac{1}{2 L_f} d_g$. We also define $\conv{G} = \left\{ G\lambda \ \middle| \ \lambda \in \Delta_m \right\}$ as the convex hull of the columns of G and
$\Lambda_G(s) = \left\{ \lambda \in \Delta_m \ \middle| \ G \lambda = s \right\}$ for all $s \in \mathbb{E}^*$.
From \eqref{label_023} it follows that $p_*(s) = +\infty$ for all $s \notin \conv{G}$. Otherwise, we have that
\begin{equation} \label{label_024}
p_*(s) = \frac{1}{2 L_f} \| s \|_*^2 + \bar{p}_*(s), \quad s \in \conv{G},
\end{equation}
where
\begin{equation} \label{label_025}
\bar{p}_*(s) = \displaystyle \min_{\lambda \in \Delta_m \cap \Lambda_G(s)} \langle \bar{f}_*, \lambda \rangle, \quad s \in \conv{G}.
\end{equation}
Being defined on a bounded convex set, function $\bar{p}_*$ is real-valued and the optimization problem for each $s$ has a non-empty set of optimal multipliers denoted by $\Lambda_{\bar{p}_*}(s)$.
Next, we prove the convexity of the function $\bar{p}_*$ on $\conv{G}$. We consider $s_1$ and $s_2$ to be two arbitrary points in $\conv{G}$. Let $\lambda_1$ and $\lambda_2$ be arbitrary members of $\Lambda_{\bar{p}_*}(s_1)$ and $\Lambda_{\bar{p}_*}(s_2)$, respectively. We have for all $0 < \alpha < 1$ that
\begin{equation}
(1-\alpha) \bar{p}_*(s_1) + \alpha \bar{p}_*(s_2) = (1-\alpha) \langle \bar{f}_*, \lambda_1 \rangle + \alpha \langle \bar{f}_*, \lambda_2 \rangle = \langle \bar{f}_*, \lambda_3 \rangle,
\end{equation}
where $\lambda_3 = (1-\alpha) \lambda_1 + \alpha \lambda_2$. However, $\lambda_3$ belongs to $\Lambda_3$ where $\Lambda_3 = \Delta_m \cap \Lambda_G( (1-\alpha) s_1 + \alpha s_2 )$ and we therefore have
\begin{equation} \label{label_026}
(1-\alpha) \bar{p}_*(s_1) + \alpha \bar{p}_*(s_2) = \langle \bar{f}_*, \lambda_3 \rangle \geq \displaystyle \min_{\lambda \in \Lambda_3} \langle \bar{f}_*, \lambda \rangle = \bar{p}_*((1-\alpha) s_1 + \alpha s_2).
\end{equation}
The arbitrary nature of $s_1$, $s_2$, $\lambda_1$ and $\lambda_2$ in \eqref{label_026} establishes the convexity of $\bar{p}$ on $\conv{G}$ which together with \eqref{label_024} implies the $1 / L_f$ strong convexity of $p_*$ on $\mathbb{E}^*$. Because $p$ is a closed convex function it follows that $p_*$ is also closed convex. Since $p_*$ is proper, it is also subdifferentiable. Thus, the results in \cite{ref_019}[Theorem~1] extend to $p$ and imply that $p$ has a Lipschitz continuous gradient with constant $L_f$.
\end{proof}

\begin{proposition} \label{label_027}
The oracle information in $\mathcal{I}_{\mathcal{Z}}$ applies to $p$ as well, namely $p(z_i) = f_i$ and $\grad{p}(z_i) = g_i$ for all $i = 1, ..., m$.
\end{proposition}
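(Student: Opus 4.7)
The plan is to establish the value statement $p(z_i)=f_i$ first, then bootstrap this to identify the minimizing dual variable $g^*(z_i)$ explicitly as $g_i$, from which the gradient statement follows via Proposition~\ref{label_018}. Throughout, the key observation is that the class $\mathcal{S}_{\mathbb{E},L_f}(\mathcal{I}_{\mathcal{Z}})$ is non-empty because the given objective $f$ itself belongs to it, so any bound that holds for all members of the class holds in particular for $f$.

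For the function value, I would sandwich $p(z_i)$ between two quantities equal to $f_i$. On the one hand, Proposition~\ref{label_021} gives $p(z_i) \geq l(z_i) \geq l_i(z_i) = f_i$, the last equality being immediate from the definition of $l_i$. On the other hand, Proposition~\ref{label_021} also yields $p(z_i) \leq \hat{f}(z_i) = f_i$ when we take $\hat{f}=f$. Combining these two inequalities gives $p(z_i)=f_i$.

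For the gradient, I would evaluate $\phi$ at the candidate $(z_i,g_i)$. Directly from \eqref{label_006}, $\phi_i(z_i,g_i)=f_i$, and since $\phi$ is the pointwise maximum of the $\phi_j$, we obtain $\phi(z_i,g_i) \geq f_i$. The reverse inequality follows from Proposition~\ref{label_009} applied to $\hat{f}=f$, which gives $f_i = f(z_i) \geq \phi(z_i,\grad{f}(z_i)) = \phi(z_i,g_i)$. Therefore $\phi(z_i,g_i)=f_i=p(z_i)=\min_{g\in\mathbb{E}^*}\phi(z_i,g)$, so $g_i$ is a minimizer of $\phi(z_i,\cdot)$. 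Because Proposition~\ref{label_008} ensures $\phi(z_i,\cdot)$ is strongly convex in $g$, the minimizer is unique, hence $g^*(z_i)=g_i$, and Proposition~\ref{label_018} concludes that $\grad{p}(z_i)=g_i$.

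There is no real obstacle here; the argument is essentially bookkeeping once one recognizes that evaluating the pointwise max $\phi$ at the "matching" index $j=i$ is tight at $(z_i,g_i)$, and that the only available upper bounds on $\phi(z_i,g_i)$ and on $p(z_i)$ come from inserting $f$ itself into the earlier class-wide bounds. The subtle point worth stating explicitly in the write-up is that $\mathcal{S}_{\mathbb{E},L_f}(\mathcal{I}_{\mathcal{Z}})$ is non-empty, so that Propositions~\ref{label_009} and~\ref{label_021} can actually be instantiated.
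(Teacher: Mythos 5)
Your proof is correct. The function-value part is exactly the paper's argument: sandwich $p(z_i)$ between $l(z_i) = f_i$ from below and $\hat{f}(z_i) = f_i$ from above using Proposition~\ref{label_021}. For the gradient, however, you take a genuinely different route. The paper argues by squeezing gradients: since $l \leq p \leq \hat{f}$ with equality at $z_i$ and all three are differentiable there (Proposition~\ref{label_022} for $p$), the gradients must coincide; the paper writes this informally as $g_i = \grad{l}(z_i) \leq \grad{p}(z_i) \leq \grad{\hat{f}}(z_i) = g_i$, which is an abuse of notation for vectors in $\mathbb{E}^*$ and tacitly treats $l$ as differentiable at $z_i$. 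You instead work through the primal-dual structure: you show $\phi(z_i, g_i) = f_i$ by combining the tightness of $\phi_i$ at $(z_i, g_i)$ with Proposition~\ref{label_009} instantiated at $\hat{f} = f$, conclude that $g_i$ attains $\min_g \phi(z_i, g)$, invoke the strong convexity from Proposition~\ref{label_008} to get uniqueness $g^*(z_i) = g_i$, and finish with the gradient formula of Proposition~\ref{label_018}. Your version is somewhat longer but more rigorous on the gradient step, sidesteps any differentiability question about the piecewise-linear model $l$ at $z_i$, and yields the additional structural fact that the optimal dual variable at $z_i$ is exactly $g_i$; the paper's version is shorter and needs only the ordering of the three functions plus smoothness of $p$. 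Your remark that the class is non-empty because $f$ itself belongs to it is a worthwhile point that the paper leaves implicit.
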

\begin{proof}
Herein we take all $i = 1, ..., m$.
From Proposition~\ref{label_021} we have that $f_i = l(z_1) \leq p(z_i) \leq \hat{f}(z_i) = f_i$ for an arbitrary $\hat{f} \in \mathcal{S}_{\mathbb{E}, L_f}(\mathcal{I}_{\mathcal{Z}})$. Propositions \ref{label_021} and \ref{label_022} further imply that $g_i = \grad{l}(z_i) \leq \grad{p}(z_i) \leq \grad{\hat{f}}(z_i) = g_i$.
\end{proof}

Propositions \ref{label_018}, \ref{label_022} and \ref{label_027} certify that $p$ is a member of $\mathcal{S}_{\mathbb{E}, L_f}(\mathcal{I}_{\mathcal{Z}})$. Proposition~\ref{label_021} further states that $p$ is a lower bound on any function in $\mathcal{S}_{\mathbb{E}, L_f}(\mathcal{I}_{\mathcal{Z}})$. Thus, $p$ has all the properties of the sought after $\hat{f}_{\operatorname{min}}$. We conclude that $p = \hat{f}_{\operatorname{min}}$.

Our bound is also robust, in the sense that it exhibits a form of tilt associativity.
\begin{proposition} \label{label_028}
For all full-domain $L_f$-smooth and convex functions $f$, all linear functions given by $c \in \mathbb{E}^*$ and $d \in \mathbb{R}$, all collections of points $\mathcal{Z} =\{z_i \in \mathbb{E} \ | \ i = 1, ..., m\}$ with $m \geq 1$, and all $y \in \mathbb{E}$ we have that
\begin{equation} \nonumber
\mathcal{L}(f + \langle c, y \rangle + d, \mathcal{Z})(y) = \mathcal{L}(f, \mathcal{Z})(y) + \langle c, y \rangle + d,
\end{equation}
where the operator $\mathcal{L}$ produces the optimal lower bound $\mathcal{L}(\tilde{f}, \mathcal{Z})$ based on the oracle information set $\mathcal{I}_{\mathcal{Z}}(\tilde{f}) \defq \{ (z_i, \tilde{f}(z_i), \grad{\tilde{f}}(z_i)) \ | \ z_i \in \mathcal{Z} \}$ using the function $\tilde{f}$ that is also full-domain $L_f$-smooth and convex.
\end{proposition}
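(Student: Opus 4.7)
The plan is to work directly with the definition of the optimal lower bound via equations \eqref{label_006}, \eqref{label_007}, and \eqref{label_013}, and exploit a simple change of variables in the dual space $\mathbb{E}^*$. Set $\tilde{f}(y) \defq f(y) + \langle c, y \rangle + d$. First I would compute the perturbed oracle data: since $\grad{\tilde{f}}(z_i) = g_i + c$ and $\tilde{f}(z_i) = f_i + \langle c, z_i \rangle + d$, a direct substitution in the supporting hyperplane definition yields $\tilde{l}_i(y) = l_i(y) + \langle c, y \rangle + d$ for every $i$.

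Plugging this into \eqref{label_006} for $\tilde{f}$ gives
\begin{equation*}
\tilde{\phi}_i(y, g) = l_i(y) + \langle c, y \rangle + d + \frac{1}{2 L_f} \| g - g_i - c \|_*^2,
\end{equation*}
so that $\tilde{\phi}(y, g) = \max_i \tilde{\phi}_i(y, g)$ differs from $\phi(y, g - c) + \langle c, y \rangle + d$ by exactly zero. The key step is then the change of variable $g' = g - c$ inside the minimization \eqref{label_013}, which is a bijection on $\mathbb{E}^*$:
\begin{equation*}
\tilde{p}(y) = \min_{g \in \mathbb{E}^*} \tilde{\phi}(y, g) = \langle c, y \rangle + d + \min_{g' \in \mathbb{E}^*} \phi(y, g') = \langle c, y \rangle + d + p(y).
\end{equation*}
Since $p = \mathcal{L}(f, \mathcal{Z})$ and $\tilde{p} = \mathcal{L}(f + \langle c, \cdot \rangle + d, \mathcal{Z})$ by construction (invoking the concluding identification $p = \hat{f}_{\operatorname{min}}$ and the fact that $\tilde{f}$ is again full-domain $L_f$-smooth and convex, with the same Lipschitz constant, since adding a linear term does not alter the Hessian), the desired equality follows.

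The proof is essentially bookkeeping plus one translation of the dual variable, so I do not expect a real obstacle. The only point that requires care is verifying that the shift $c$ is common to all indices $i$ in the squared-norm term, which is precisely what makes the inner $\max_i$ invariant under the change of variables. It is also worth remarking that one can alternatively prove this via the reduced form \eqref{label_017}, noting that $\tilde{G}\lambda = G\lambda + c$ for $\lambda \in \Delta_m$ and that the new quantities $\tilde{f}_*$ and $\tilde{d}_g$ conspire (after expanding $\|G\lambda + c\|_*^2$) to produce exactly the additive term $\langle c, y \rangle + d$; however, the dual-variable change-of-variables argument is considerably cleaner and would be my preferred route.
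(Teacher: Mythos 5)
Your proof is correct and follows essentially the same route as the paper: perturb the oracle data, show that each primal-dual bound $\phi_i$ picks up exactly the additive term $\langle c, y \rangle + d$, and conclude by taking the maximum over $i$ and the minimum over $g$. In fact your explicit translation $g' = g - c$ of the dual variable is the more careful version of this step --- the paper's displayed computation writes the perturbed quadratic term as $\| g - g_i \|_*^2$ rather than $\| g - g_i - c \|_*^2$ and thereby elides the change of variables that your argument makes precise.
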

\begin{proof} Here, we consider all $y \in \mathbb{E}$ and $g \in \mathbb{E}^*$.

We define $\bar{f}(y) \defq f(y) + \langle c, y \rangle + d$ with oracle output $\bar{f}_i = \bar{f}(z_i)$ and $\bar{g}_i = \grad{\bar{f}}(z_i)$. The primal-dual bounds for $\bar{f}$ are thus given by
\begin{equation}
\bar{\phi}_i(y, g) \defq \bar{f}_i + \langle \bar{g}_i, y - z_i \rangle + \frac{1}{2 L_f} \| g - g_i\|_*^2.
\end{equation}
Taking into account that $\bar{f}_i = f_i + \langle c, z_i \rangle + d$ and $\bar{g}_i = g_i + c$ we have
\begin{equation} \label{label_029}
\begin{aligned}
\bar{\phi}_i(y, g) &= f_i + \langle c, z_i \rangle + d + \langle g_i + c, y - z_i\rangle + \frac{1}{2 L_f} \| g - g_i\|_*^2 \\
& = f_i + \langle g_i, y - z_i\rangle + \frac{1}{2 L_f} \| g - g_i\|_*^2 + \langle c, y \rangle + d = \phi_i(y, g) + \langle c, y \rangle + d.
\end{aligned}
\end{equation}
We conclude by taking the maximum over $i$ followed by the minimum over $g$ in \eqref{label_029}.
\end{proof}

\subsection{Information selection and aggregation} \label{label_030}
Storing the entire history of oracle calls in memory may not be practical in many contexts, particularly when the optimization method needs many iterations to produce results of adequate accuracy. In this section we study how the memory footprint of the model can be reduced while preserving its most useful properties.

\subsubsection{Primal bounds} \label{label_031}
One approach is to maintain the bundle employed by the Gradient Methods with Memory. The bundle addresses the memory limitations by storing a subset of the entire history, a weighted average or a mix thereof. Hence, we consider such an aggregated model of size $\tilde{m}$, not necessarily equal to $m$, storing $\tilde{f}_* \in \mathbb{R}^{\tilde{m}}$ and $\tilde{G} \in \mathbb{E}^* \times \mathbb{R}^m$. The derived model is obtained by applying the linear transform $\mathcal{T} \in \mathbb{R}^{m} \times \mathbb{R}^{\tilde{m}}$ to the history of size $m$ as:
$\tilde{f_*} = \mathcal{T}^T f_*$ and $\tilde{G} = G \mathcal{T}$. The mix between weighted averaging and selection equates to having all columns $\mathcal{T}_i$, $i = 1, ..., \tilde{m}$, within $\Delta_{m}$. The optimal lower bound $\tilde{p}$ produced by the derived model described by $\tilde{f}_*$ and $\tilde{G}$ is given by
\begin{equation} \label{label_032}
\tilde{p}(y) = \max_{\tilde{\lambda} \in \Delta_{\tilde{m}}} \left\{ \tilde{\rho}(y, \tilde{\lambda}) \defq \left\langle \tilde{\lambda}, \tilde{G}^T y - \tilde{f}_* + \frac{1}{2 L_f} \tilde{d}_g \right\rangle - \frac{1}{2 L_f} \| \tilde{G} \tilde{\lambda} \|^2_* \right\}, \quad y \in \mathbb{E},
\end{equation}
where $\tilde{d}_g \in \mathbb{R}^{\tilde{m}}$ is a vector that stores the squared dual norms for each column of $G$, namely $(\tilde{d}_g)_i = \| \tilde{G}_i \|^2_*$, $i = 1, ... , \tilde{m}$.
To prove our main result,
we need the following lemma.
\begin{lemma} \label{label_033}
For \emph{any} size $m \geq 1$ and any symmetric positive semidefinite $m \times m$ matrix $C$ and \emph{any} $\lambda \in \Delta_m$, we have that
\begin{equation}
\langle \lambda, C \lambda \rangle \leq \langle \lambda, \diag(C) \rangle,
\end{equation}
where $\diag(C)$ denotes the main diagonal of $C$.
\end{lemma}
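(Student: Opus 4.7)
The plan is to reduce the inequality to a variance/Jensen statement by exploiting the positive semidefiniteness of $C$. Since $C$ is symmetric positive semidefinite, it admits a factorization $C = A^T A$ for some matrix $A$ whose columns I will denote by $a_1, \dots, a_m$ (in some Euclidean space). Under this factorization, the entries of $C$ are inner products: $C_{ij} = \langle a_i, a_j\rangle$, and in particular $C_{ii} = \|a_i\|^2$.

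With that identification, both sides of the desired inequality become familiar quadratic quantities. The right-hand side rewrites as
\begin{equation} \nonumber
\langle \lambda, \diag(C) \rangle = \sum_{i=1}^m \lambda_i \|a_i\|^2,
\end{equation}
while the left-hand side rewrites as
\begin{equation} \nonumber
\langle \lambda, C \lambda \rangle = \sum_{i,j=1}^m \lambda_i \lambda_j \langle a_i, a_j \rangle = \Bigl\| \sum_{i=1}^m \lambda_i a_i \Bigr\|^2.
\end{equation}
Since $\lambda \in \Delta_m$, the inequality then reduces to the classical Jensen inequality for the convex function $\|\cdot\|^2$, or equivalently to the non-negativity of a variance.

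To make this entirely self-contained, I would then verify the inequality directly through the algebraic identity
\begin{equation} \nonumber
\sum_{i=1}^m \lambda_i \|a_i\|^2 - \Bigl\| \sum_{i=1}^m \lambda_i a_i \Bigr\|^2 = \sum_{i=1}^m \lambda_i \Bigl\| a_i - \sum_{j=1}^m \lambda_j a_j \Bigr\|^2,
\end{equation}
which is easily checked by expanding the right-hand side and using $\sum_i \lambda_i = 1$. Since every term on the right is non-negative (with $\lambda_i \geq 0$), the difference is non-negative, which is precisely the claim.

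I do not foresee any real obstacle. The only conceptual step is recognizing that positive semidefiniteness is exactly what lets us represent $C$ as a Gram matrix; after that, the identity is a one-line computation and no property of $\diag(C)$ beyond the $C_{ii} = \|a_i\|^2$ interpretation is needed. The lemma is stated for \emph{any} $m$ and will be applied later with $C$ playing the role of a Gram-type kernel induced by the stored gradients.
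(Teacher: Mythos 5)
Your proposal is correct and follows essentially the same route as the paper: factor $C$ as a Gram matrix and express the difference $\langle \lambda, \diag(C)\rangle - \langle \lambda, C\lambda\rangle$ as a manifestly nonnegative sum of squares using $\sum_i \lambda_i = 1$ and $\lambda_i \geq 0$. The only cosmetic difference is that you write this sum in variance form $\sum_i \lambda_i \| a_i - \sum_j \lambda_j a_j\|^2$, while the paper uses the equivalent pairwise form $\frac{1}{2}\sum_{i,j}\lambda_i\lambda_j\|R_i - R_j\|^2$.
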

\begin{proof}
Matrix $C$ admits a square root $R = C^{\frac{1}{2}}$ with columns $R_i$, $i = 1, ..., m$. We thus have that $C = R^T R$. Using $\sum_{i = 1}^{m} \lambda_i = 1$ we arrive at
\begin{equation} \label{label_034}
\begin{gathered}
\langle \lambda, \diag(C) \rangle - \langle \lambda, C \lambda \rangle
= \left( \displaystyle \sum_{j = 1}^{m} \lambda_j \right) \displaystyle \sum_{i = 1}^{m} \lambda_i \langle R_i, R_i \rangle - \displaystyle \sum_{i = 1}^{m} \displaystyle \sum_{j = 1}^{m} \lambda_i \lambda_j \langle R_i, R_j \rangle
\end{gathered}
\end{equation}
Multiplying both sides in \eqref{label_034} by two and rearranging terms we obtain
\begin{equation}
2 \left(\langle \lambda, \diag(C) \rangle - \langle \lambda, C \lambda \rangle\right) = \displaystyle \sum_{i = 1}^{m} \displaystyle \sum_{j = 1}^{m} \lambda_i \lambda_j \langle R_i - R_j, R_i - R_j \rangle \geq 0.
\end{equation}
\end{proof}

\begin{proposition} \label{label_035}
The optimal bound generated by the derived model using \eqref{label_032} is dominated by the bound using the entire oracle history \eqref{label_017}, namely
\begin{equation}
\tilde{p}(y) \leq p(y), \quad y \in \mathbb{E}.
\end{equation}
\end{proposition}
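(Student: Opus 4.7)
The plan is to exhibit a feasible dual variable $\lambda \in \Delta_m$ for the full-history problem \eqref{label_017} from any $\tilde{\lambda} \in \Delta_{\tilde{m}}$ feasible for the reduced problem \eqref{label_032}, and show that $\tilde\rho(y,\tilde\lambda) \leq \rho(y,\lambda)$. Concretely, I would set $\lambda \defq \mathcal{T}\tilde\lambda$. Because each column $\mathcal{T}_i \in \Delta_m$ by hypothesis, $\lambda$ is a convex combination of points in $\Delta_m$ and hence itself lies in $\Delta_m$.

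Next I would compare the two expressions term by term. Using $\tilde{G} = G\mathcal{T}$ and $\tilde{f}_* = \mathcal{T}^T f_*$, the identities $\tilde{G}\tilde\lambda = G\lambda$, $\langle \tilde\lambda, \tilde{G}^T y\rangle = \langle \lambda, G^T y\rangle$, and $\langle \tilde\lambda, \tilde{f}_*\rangle = \langle \lambda, f_*\rangle$ follow immediately by transpose shuffling, so the linear pieces and the squared-norm penalty $\tfrac{1}{2L_f}\|G\lambda\|_*^2$ match exactly between $\tilde\rho(y,\tilde\lambda)$ and $\rho(y,\lambda)$. The only nontrivial term is the one involving $\tilde{d}_g$ versus $d_g$, where the inequality must go the right way for the reduced bound to be dominated.

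The crux, and the place where Lemma~\ref{label_033} enters, is showing that $\langle \tilde\lambda, \tilde{d}_g \rangle \leq \langle \lambda, d_g \rangle$. Define the symmetric positive semidefinite Gram matrix $C \in \mathbb{R}^{m\times m}$ with $C_{jk} \defq \langle g_j, B^{-1} g_k\rangle$; then $\diag(C) = d_g$, and for each column $\mathcal{T}_i \in \Delta_m$ we have $\|G\mathcal{T}_i\|_*^2 = \langle \mathcal{T}_i, C\mathcal{T}_i\rangle \leq \langle \mathcal{T}_i, \diag(C)\rangle = \langle \mathcal{T}_i, d_g\rangle$ by Lemma~\ref{label_033}. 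Weighting these inequalities by the nonnegative coefficients $\tilde\lambda_i$ and summing yields
\begin{equation*}
\langle \tilde\lambda, \tilde{d}_g\rangle = \sum_i \tilde\lambda_i \|G\mathcal{T}_i\|_*^2 \leq \sum_i \tilde\lambda_i \langle \mathcal{T}_i, d_g\rangle = \langle \mathcal{T}\tilde\lambda, d_g\rangle = \langle \lambda, d_g\rangle,
\end{equation*}
which is exactly what is needed since this term appears with a positive sign in $\rho$ and $\tilde\rho$.

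Combining the matching identities with this one inequality gives $\tilde\rho(y,\tilde\lambda) \leq \rho(y, \mathcal{T}\tilde\lambda) \leq \max_{\lambda\in\Delta_m}\rho(y,\lambda) = p(y)$, and maximizing the left side over $\tilde\lambda \in \Delta_{\tilde m}$ yields $\tilde p(y)\leq p(y)$. The main conceptual obstacle is recognizing the right Gram-matrix reformulation to invoke Lemma~\ref{label_033}; once that is in place, everything else is linear algebra. A minor bookkeeping point to check is the case when some $\tilde\lambda_i = 0$ and the sign of the $d_g$ term (it enters $\rho$ with a $+\tfrac{1}{2L_f}$ coefficient, not a $-$), but both are routine.
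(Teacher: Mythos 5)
Your proposal is correct and follows essentially the same route as the paper: substitute $\lambda = \mathcal{T}\tilde{\lambda}$, observe that the linear terms and $\|\tilde{G}\tilde{\lambda}\|_*^2 = \|G\lambda\|_*^2$ match exactly, and apply Lemma~\ref{label_033} column-by-column to the Gram matrix $C = G^T B^{-1} G$ (with each $\mathcal{T}_i \in \Delta_m$) to control the diagonal term. The paper packages the same estimate as a single vector inequality $\langle \tilde{\lambda}, \diag(\mathcal{T}^T C \mathcal{T})\rangle \leq \langle \tilde{\lambda}, \mathcal{T}^T \diag(C)\rangle$, but the content is identical.
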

\begin{proof}
The objective in \eqref{label_032} can be rewritten using $C = G^T B^{-1} G$ as
\begin{equation} \label{label_036}
\tilde{\rho}(y, \tilde{\lambda}) = \langle \tilde{\lambda}, \mathcal{T}^T G^Ty - \mathcal{T}^T f_* \rangle + \frac{1}{2 L_f} \left( \langle \tilde{\lambda}, \diag(\mathcal{T}^T C \mathcal{T}) \rangle - \langle \tilde{\lambda}, \mathcal{T}^T C \mathcal{T} \tilde{\lambda} \rangle \right)
\end{equation}
By expanding terms, we have
\begin{equation} \label{label_037}
(\diag(\mathcal{T}^T C \mathcal{T}))_i = \langle \mathcal{T}_i, C \mathcal{T}_i \rangle, \quad (\mathcal{T}^T \diag(C))i = \langle \mathcal{T}_i, \diag(C) \rangle, \quad i = 1, ..., \tilde{m} .
\end{equation}
Applying Lemma~\ref{label_033} to \eqref{label_037} we obtain for all $\tilde{\lambda} \in \Delta_{\tilde{m}}$ that
\begin{equation} \label{label_038}
\langle \tilde{\lambda}, \diag(\mathcal{T}^T C \mathcal{T}) \rangle \leq \langle \tilde{\lambda}, \mathcal{T}^T \diag(C) \rangle = \langle \lambda, \diag(C) \rangle ,
\end{equation}
using the variable substitution $\lambda = \mathcal{T} \tilde{\lambda}$, with $\lambda$ spanning the set $\Lambda \defq \left\{ \mathcal{T} \alpha \ \middle| \ \alpha \in \Delta_{\tilde{m}} \right\}$. Combining \eqref{label_036} and \eqref{label_038} implies for all $\tilde{\lambda} \in \Delta_{\tilde{m}}$ and $y \in \mathbb{E}$ that
\begin{equation} \label{label_039}
\tilde{\rho}(y, \tilde{\lambda}) \leq \rho(y, \lambda) = \langle \lambda, G^T y - f_* \rangle + \frac{1}{2 L_f} \left( \langle \lambda, \diag(C) \rangle - \langle \lambda, C \lambda \rangle \right).
\end{equation}
Finally, observing that $\Lambda \subseteq \Delta_m$, we reach the desired result
\begin{equation}
\tilde{p}(y) = \max_{\tilde{\lambda} \in \Delta_{\tilde{m}}} \tilde{\rho}(y, \tilde{\lambda}) \overset{\eqref{label_039}}{\leq} \max_{\lambda \in \Lambda} \rho(y, \lambda) \leq \max_{\lambda \in \Delta_m} \rho(y, \lambda) = p(y), \quad y \in \mathbb{E}.
\end{equation}
\end{proof}
Proposition~\ref{label_035} implies that model selection and aggregation reduces memory usage at the expense of model tightness. However, $\tilde{p}$ remains a global lower bound on the objective, an essential property for any first-order scheme with global convergence guarantees. We also notice that constructing an aggregated model larger than the oracle history, corresponding to $\tilde{m} > m$, is detrimental in terms of memory requirements and does not improve bound tightness.

\subsubsection{Primal-dual bounds} \label{label_040} We also consider the aggregation of the primal-dual bounds in Subsection~\ref{label_003}. These bounds can be be rewritten as
\begin{align}
\phi_i(y, g) &= f_i + \langle g_i, y - z_i \rangle + \frac{1}{2 L_f} \| g - g_i \|_*^2 \label{label_041}\\
&= -(\bar{f}_*)_i + \left\langle g_i, y - \frac{1}{L_f} B^{-1} g \right\rangle + \frac{1}{2 L_f} \| g \|_*^2, \quad i = 1, ... , m, \label{label_042}
\end{align}
where $\bar{f}_* \defq f_* - \frac{1}{2 L_f} d_g$ carries over from the proof of Proposition~\ref{label_022}. Using \eqref{label_042} we define for all $\lambda \in \Delta_m$ the aggregate primal-dual bound
\begin{equation}
\Phi_{\lambda}(y, g) \defq \displaystyle\sum_{i = 1}^{m} \lambda_i \phi_i(y, g) = -\langle \lambda, \bar{f}_* \rangle + \left\langle G \lambda, y - \frac{1}{L_f} B^{-1} g \right\rangle + \frac{1}{2 L_f} \| g \|_*^2.
\end{equation}
Thus, for any $\lambda \in \Delta_m$ we have a global lower bound on $p$ in the form of
\begin{equation} \label{label_043}
\min_{g \in \mathbb{E}^*} \Phi_{\lambda}(y, g) = \rho(y, \lambda) \leq \max_{\lambda \in \Delta_m} \rho(y, \lambda) = p(y) , \quad y \in \mathbb{E}.
\end{equation}

\subsection{A geometric interpretation}

To better understand the shape of function $p$, we introduce a more intuitive third formulation, based entirely on geometric arguments. The main building blocks of our interpretation are parabolae defined earlier as simple quadratic functions whose Hessians are multiple of $B$. All parabolae can be written in the following canonical form:
\begin{equation} \label{label_044}
P_{\gamma, v, w}(y) \defq w + \frac{\gamma}{2} \| y - v \|^2 , \quad y \in \mathbb{E},
\end{equation}
where $\gamma > 0$, $v \in \mathbb{E}$ and $w \in \mathbb{R}$ represent the curvature, vertex (optimal point) and optimal value, respectively.

We consider the set of all parabolae with curvature $L_f$ that dominate $l$ (upper parabolae), written as
\begin{equation} \label{label_045}
\mathcal{P}(\mathcal{Z}) \defq \left\{ P_{L_f, v, w} \ \middle| \ v \in \mathbb{E}, \ w \in \mathbb{R}, \ P_{L_f, v, w}(y) \geq l(y), \ y \in \mathbb{E} \right\} .
\end{equation}
Let the lower envelope of all $\Psi \in \mathcal{P}(\mathcal{Z})$ be given by
\begin{equation} \label{label_046}
\hat{p}(y) \defq \displaystyle \min_{\Psi \in \mathcal{P}(\mathcal{Z})} \Psi(y), \quad y \in \mathbb{E}.
\end{equation}

\begin{proposition} \label{label_047}
The formulation in \eqref{label_046} is equivalent to \eqref{label_013}, namely
\begin{equation}
p(y) = \hat{p}(y), \quad y \in \mathbb{E} .
\end{equation}
\end{proposition}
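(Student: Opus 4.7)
The plan is to characterize the set $\mathcal{P}(\mathcal{Z})$ explicitly and then show that the variables parameterizing an upper parabola can be reparameterized to match the variables in the primal-dual formulation of $p$. This gives a direct equivalence between the two optimization problems defining $p$ and $\hat{p}$ rather than a pair of inequalities.

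First I would fix $i$ and compute $\min_{y\in\mathbb{E}}(P_{L_f,v,w}(y) - l_i(y))$. Taking the gradient to zero in $y$ yields the minimizer $y = v + \frac{1}{L_f} B^{-1} g_i$, and substituting gives minimum value $w - l_i(v) - \frac{1}{2L_f}\|g_i\|_*^2$. Therefore $P_{L_f,v,w} \in \mathcal{P}(\mathcal{Z})$ if and only if $w \geq l_i(v) + \frac{1}{2L_f}\|g_i\|_*^2$ for every $i$, which is the same as $w \geq \max_i[l_i(v) + \frac{1}{2L_f}\|g_i\|_*^2]$. Plugging the minimal feasible $w$ into \eqref{label_046} produces
\begin{equation*}
\hat{p}(y) = \min_{v \in \mathbb{E}} \Bigl\{ \max_{i=1,\dots,m}\bigl[l_i(v) + \tfrac{1}{2L_f}\|g_i\|_*^2\bigr] + \tfrac{L_f}{2}\|y - v\|^2 \Bigr\}.
\end{equation*}

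The next step is to relate this to $p(y) = \min_{g \in \mathbb{E}^*}\max_i\phi_i(y,g)$ via the bijection $g \leftrightarrow v$ defined by $g = L_f B(y - v)$, equivalently $v = y - \frac{1}{L_f} B^{-1} g$. For this choice, a short calculation using $l_i(y) = l_i(v) + \langle g_i, y-v\rangle$ and expanding $\|g - g_i\|_*^2$ yields the pointwise identity
\begin{equation*}
l_i(y) + \tfrac{1}{2L_f}\|g - g_i\|_*^2 \;=\; l_i(v) + \tfrac{1}{2L_f}\|g_i\|_*^2 + \tfrac{L_f}{2}\|y-v\|^2,
\end{equation*}
the cross terms $\pm\langle g_i, y-v\rangle$ cancelling exactly. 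Taking the maximum over $i$ on both sides (the $\|y-v\|^2$ term is independent of $i$) and then the minimum over $g$ (equivalently over $v$) identifies $p(y)$ with $\hat{p}(y)$.

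The only nontrivial step is the algebraic identity above, and even that is just a completion of the square, so I do not anticipate a real obstacle. The conceptual content lies entirely in recognising that the dual variable $g$ in \eqref{label_013} plays the role of the negative of the "tilt" of the upper parabola, i.e.\ it is affinely related to its vertex $v$; once this correspondence is written down, both sides reduce to the same optimization problem.
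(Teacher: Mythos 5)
Your proof is correct. The characterization of $\mathcal{P}(\mathcal{Z})$ via $w \geq \max_i\bigl[l_i(v) + \tfrac{1}{2L_f}\|g_i\|_*^2\bigr]$ is right (the minimizer of $P_{L_f,v,w}-l_i$ is indeed $v+\tfrac{1}{L_f}B^{-1}g_i$), the completion-of-the-square identity checks out (both sides reduce to $l_i(v) + \tfrac{1}{2L_f}\|g\|_*^2 + \tfrac{1}{2L_f}\|g_i\|_*^2$ under $y-v=\tfrac{1}{L_f}B^{-1}g$), and since the residual $\tfrac{L_f}{2}\|y-v\|^2$ is independent of $i$ the max and the min commute with the substitution as you claim. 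However, your route is genuinely different from the paper's. The paper proves the two inequalities separately and leans heavily on the structural results already established for $p$: for $p \geq \hat{p}$ it observes that the upper parabola $\Psi_{p,y}$ of $p$ at $y$ belongs to $\mathcal{P}(\mathcal{Z})$ (using that $p$ is $L_f$-smooth and dominates $l$, Propositions \ref{label_021} and \ref{label_022}); for $\hat{p} \geq p$ it takes an arbitrary $\Psi \in \mathcal{P}(\mathcal{Z})$, forms the convex hull $c = \operatorname{conv}(\Psi, \Psi_{p,z_1}, \dots, \Psi_{p,z_m})$, verifies via the Fenchel conjugate that $c$ is $L_f$-smooth and interpolates $\mathcal{I}_{\mathcal{Z}}$, hence $c \in \mathcal{S}_{\mathbb{E},L_f}(\mathcal{I}_{\mathcal{Z}})$ and $\Psi \geq c \geq p$ by the minimality of $p$. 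Your argument is entirely self-contained and elementary: it exhibits the two definitions as the \emph{same} optimization problem under the affine bijection $v = y - \tfrac{1}{L_f}B^{-1}g$, needing none of the earlier propositions. What you lose relative to the paper is the additional structural information its proof generates along the way (the explicit membership of the touching parabolae in $\mathcal{P}(\mathcal{Z})$ and the convex-hull interpolant construction); what you gain is a shorter, purely algebraic equivalence that also makes transparent the geometric meaning of the dual variable $g$ as a parameterization of the parabola's vertex, which the paper only hints at.
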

\begin{proof}
We fix an arbitrary point $y \in \mathbb{E}$. We define parabola $\Psi_{p, y}(x)$ similarly to \eqref{label_005}, but this time based on $p$ as
\begin{equation} \label{label_048}
\Psi_{p, y}(x) \defq p(y) + \langle \grad{p}(y), x - y \rangle + \frac{L_f}{2} \| x - y \|^2, \quad x \in \mathbb{E}.
\end{equation}

First, Propositions \ref{label_021} and \ref{label_022} state that $\Psi_{p, y}(x) \geq p(x) \geq l(x)$ for all $x, y \in \mathbb{E}$. Therefore $\Psi_{p, y} \in \mathcal{P}(\mathcal{Z})$ for all $y \in \mathbb{E}$ and
\begin{equation} \label{label_049}
p(y) = \Psi_{p, y}(y) \geq \min_{\Psi \in \mathcal{P}(\mathcal{Z})} \Psi(y) = \hat{p}(y), \quad y \in \mathbb{E} .
\end{equation}
Next, we fix an arbitrary $\Psi = P_{L_f, v, w}$ in $\mathcal{P}(\mathcal{Z})$. We construct the function $c$ based on $\Psi$ as $c = \operatorname{conv}(\Psi, \Psi_{p, z_1}, \Psi_{p, z_2}, ..., \Psi_{p, z_m})$ where $\operatorname{conv}$ in this context denotes the function whose epigraph is given by the convex hull of the argument function epigraphs.

Using the expression in \eqref{label_044}, we can write $\Psi_{p, z_i}$ for $i = 1, ..., m$ in canonical form as $\Psi_{p, z_i} = P_{L_f, v_i, w_i}$, $i = 1, ..., m$, where $v_i \defq z_i - \frac{1}{L_f} B^{-1} g_i$ and $w_i \defq f_i - \frac{1}{2 L_f} \| g_i \|_* ^2$. We set $v_{m + 1} = v$ and $w_{m + 1} = w$. Using the fact that $c$ is proper convex along with \cite[Theorem 16.5]{ref_023} we obtain an expression for the Fenchel dual of $c$ as
\begin{align}
c_*(s) &= \max_{i = 1, ..., m + 1} \left(w_i + \frac{L_f}{2} \|y - v_i\|^2 \right)_* = \max_{i = 1, ..., m + 1} \left\{ \frac{1}{2 L_f} \| s \|_*^2 - w_i + \langle v_i, s \rangle \right\} \nonumber \\
&= \frac{1}{2 L_f} \| s \|_*^2 + \displaystyle \max_{i = 1, ..., m + 1} \left\{ \langle v_i, s \rangle - w_i \right\} , \quad s \in \mathbb{E}^*.
\end{align}
The conjugate function $c_*$ is $1 / L_f$ strongly convex and therefore $c$ is $L_f$ smooth.

On the other hand, from $\Psi \in \mathcal{P}(\mathcal{Z})$ and the $L_f$-smoothness of $f$ we have that $P_{L_f, v_i, w_i}(y) \geq l(y)$ for all $y \in \mathbb{E}$, $i = 1, ..., m + 1$ with $P_{L_f, v_i, w_i}(z_i) = f_i = l(z_i)$ and $\grad{P}_{L_f, v_i, w_i}(z_i) = g_i = \grad{l}(z_i)$ for all $i = 1, ..., m$. Therefore,
\begin{equation} \label{label_050}
c(z_i) = f_i, \quad \grad{c}(z_i) = g_i, \quad i = 1, ..., m.
\end{equation}
The $L_f$-smoothness of $c$ together with \eqref{label_050} implies that $c \in \mathcal{S}_{\mathbb{E}, L_f}(\mathcal{I}_{\mathcal{Z}})$. From the definition of $c$ and Proposition~\ref{label_021} it follows that
\begin{equation}
P_{L_f, v, w}(y) \geq c(y) \geq p(y), \quad y \in \mathbb{E}.
\end{equation}
Since we have assumed $\Psi = P_{L_f, v, w}$ to be arbitrary, we have that
\begin{equation} \label{label_051}
\hat{p}(y) = \min_{\Psi \in \mathcal{P}(\mathcal{Z})} \Psi(y) \geq p(y), \quad y \in \mathbb{E} .
\end{equation}
Combining \eqref{label_049} and \eqref{label_051} gives our result.
\end{proof}
An example of how our bound relates geometrically to the piece-wise lower linear model $l$ is shown in Figure~\ref{label_052}. In (a) we show that our bound can be interpreted as a form of smoothing. However, unlike the technique employed in \cite{ref_019}, which always produces a \emph{lower} bound on $l$, our bound is a smooth function that \emph{upper bounds} $l$ (see Proposition~\ref{label_021}). In (b) we show how our bound can be interpreted as a lower envelope of all upper parabolae on $l$.

\begin{figure}[h] \centering \footnotesize
\begin{minipage}[t]{0.48\linewidth} \centering
\includegraphics[width=\textwidth]{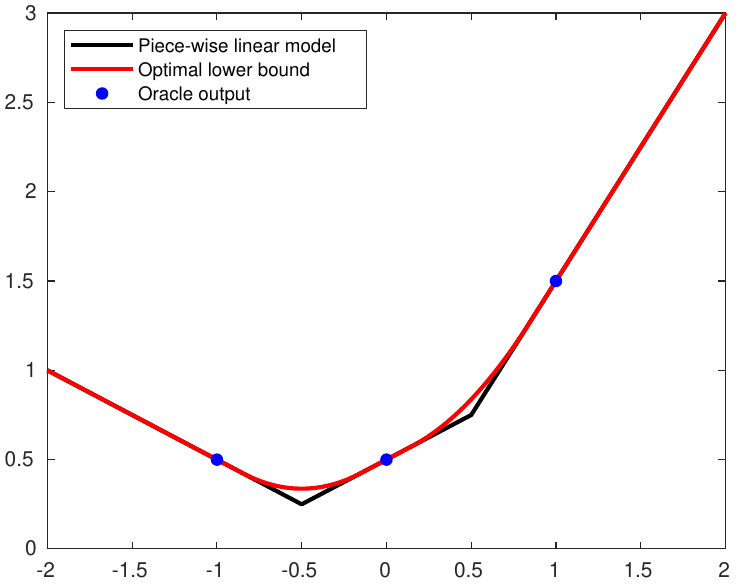}
(a) The optimal lower bound as a smoothing of the piece-wise linear model $l$
\end{minipage}
\hspace{2mm}
\begin{minipage}[t]{0.48\linewidth} \centering
\includegraphics[width=\textwidth]{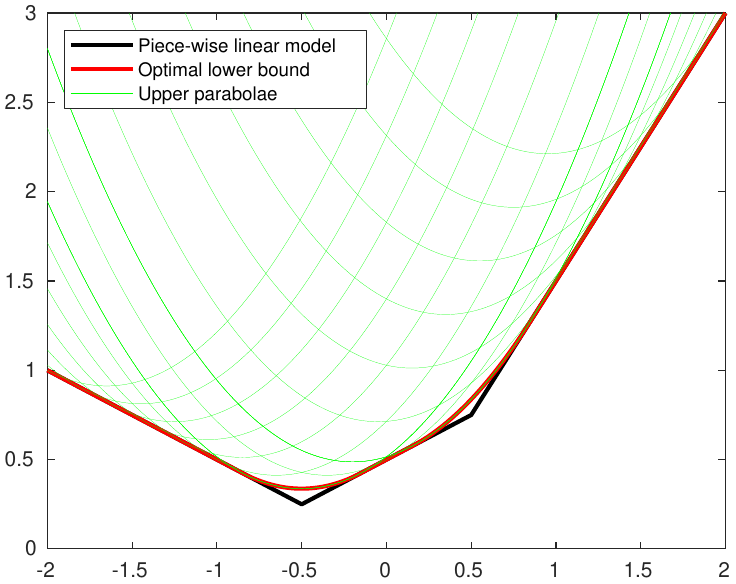}
(b) The optimal lower bound as the lower envelope of the parabolae dominating $l$.
\end{minipage}
\caption{An optimal interpolating lower bound corresponding to the oracle information \newline{} $\mathcal{I}_{\mathcal{Z}} = \{ (-1,0.5,-0.5), (0,0.5,0.5), (1,1.5,1.5)\}$}
\label{label_052}
\end{figure}

In the remainder of this work, we explore how our new bounds can be integrated in first-order schemes with the aim of improving their performance.

\section{An Improved Gradient Method with Memory} \label{label_053}

The first-order methods that are the most effective at utilizing the collected oracle information are the Gradient Methods with Memory, employing a piece-wise linear lower model~\cite{ref_021,ref_007,ref_008}. In Proposition~\ref{label_021} we have seen that the optimal bounds are tighter, if computed with reasonable accuracy. Moreover, in Subsection~\ref{label_030} we have shown that an arbitrarily small subset of the oracle history still produces valid bounds.
These properties suggest that our bound can, under certain conditions, \emph{directly} substitute the piece-wise linear model in memory methods.

One instance in which this assumption holds is the augmentation of the efficient Gradient Method with Memory, a fixed-point scheme described in \cite[Algorithm~1]{ref_008}. Note that when dealing with optimization methods, we use a notation similar to the one in \cite{ref_008}, with the model at a given point comprising $\tilde{m}$ records, stored as $H \in \mathbb{R}^{\tilde{m}}$, $\tilde{G} \in \mathbb{E} \times \mathbb{R}^{\tilde{m}}$.

To construct a fixed-point scheme, we impose that the first record be $\tilde{G}_1 = \grad{f}(\bar{x})$, and
$H_1 = f(\bar{x}) - \langle \grad{f}(\bar{x}), \bar{x} \rangle$, where $\bar{x}$ is the most recent test point. The remainder of $H$ and $\tilde{G}$ can be a subset of the oracle history or an aggregate. To account for all possibilities, aside from the condition on the first record, we \emph{only} require that
\begin{equation} \label{label_054}
f(y) \geq \max_{\lambda \in \Delta_{\tilde{m}}} \left\{ \left\langle H + \frac{\tau_f}{2} \diag(Q) + \tilde{G}^T y, \lambda \right\rangle - \frac{\tau_f}{2} \langle \lambda, Q \lambda \rangle \right\}, \quad y \in \mathbb{E},
\end{equation}
where $\tau_f \defq 1 / L_f$ , $Q \defq \tilde{G}^T B^{-1} \tilde{G}$, $\diag(Q)$ is the diagonal vector of $Q$, given by $(\diag(Q))_i = \| \tilde{G}_i \|_*^2$, $i = 1, ..., \tilde{m}$. Subsection~\ref{label_030} shows how our model can be obtained from the entire oracle history to satisfy \eqref{label_054}, using the notation correspondence $H = -\tilde{f}_*$.

Similarly to Algorithm~1 in \cite{ref_008}, new iterates can be generated using the majori-zation-minimization (MM) paradigm~\cite{ref_022,ref_014}. The update rule in this case is given by
$x_{+} = \displaystyle\argmin_{y \in \mathbb{E}} \max_{\lambda \in \Delta_{\tilde{m}}} u_{\lambda}(y)$, where
\begin{equation} \label{label_055}
u_{\lambda}(y) \defq \left\langle H + \frac{\tau_f}{2} \diag(Q) + \tilde{G}^T y, \lambda \right\rangle - \frac{\tau_f}{2} \langle \lambda, Q \lambda \rangle + \frac{1}{2 a}\| y - \bar{x} \|^2,
\end{equation}
with $a$ being the algorithm step size obtained though a line-search procedure outlined in the sequel. Strong duality holds and we have that $x_+ = \bar{x} - a B^{-1} \tilde{G} \lambda^*$, where
\begin{equation} \label{label_056}
\lambda^* = \argmin_{\lambda \in \Delta_{\tilde{m}}} \left\{ \frac{a + \tau_f}{2} \langle \lambda, Q \lambda \rangle - \left\langle H + \frac{\tau_f}{2} \diag(Q) + \tilde{G}^T \bar{x}, \lambda \right\rangle \right\}.
\end{equation}
The auxiliary problem in \eqref{label_056} is a Quadratic Program with a structure very similar to the one in \cite{ref_008}. It may not allow an exact solution.

We consider a subprocess $\lambda_{C,D}(A)$ that produces an \emph{approximate} solution $\lambda$ to the more general problem $\min_{\lambda \in \Delta_{\tilde{m}}} \left\{ \frac{A}{2} \langle \lambda, C \lambda \rangle - \langle D, \lambda \rangle \right\}$, with $d_{C, D}(A, \lambda) \defq \frac{A}{2} \langle \lambda, C \lambda \rangle - \langle D, \lambda \rangle$ denoting the objective value. An approximate solution to \eqref{label_056} is thus given by $\lambda_{Q,R}(a + \tau_f)$ where $R = H + \frac{\tau_f}{2} \diag(Q) + \tilde{G}^T \bar{x}$.

The line-search procedure for $a$, according to the MM principle, ensures that $u_{\lambda}(x_+)$ dominates $f(x_+)$. Since we allow inexact solutions $\lambda$ from \eqref{label_056} of arbitrary quality, line-search may not terminate. To remedy this, we simply revert to a Gradient Method step (setting $a = \tau_f$, $\lambda_1 = 1$ and $\lambda_i = 0$, $i = 2, ..., \tilde{m}$) if $a$ happens to fall below $t_f$. With the model changing from one iteration to the next, we index the quantities appropriately and discard the model reduction notation. For instance, the tuple $(H, \tilde{G})$ becomes $(H_{k + 1}, G_{k + 1})$. The resulting method is listed in Algorithm~\ref{label_057}.

\begin{algorithm}[h!]
\caption{An Improved Gradient Method with Memory}
\label{label_057}
\begin{algorithmic}[1]
\STATE \textbf{Input:} $B \succ 0$, $x_0 \in \mathbb{E}$, $L \geq L_f$, $r_u > 1 \geq r_d > 0$, $T \in \{1, 2, ..., +\infty \}$
\STATE $\tau = 1 / L$, $a_0 = \tau$
\FOR{$k = 0,\ldots{},T-1$}
\STATE $g_{k + 1} = \grad{f}(x_k)$
\STATE $h_{k + 1} = f(x_k) - \langle g_{k + 1}, x_k \rangle$ \label{label_058}
\STATE Generate $G_{k + 1}$ and $H_{k + 1}$ to include $g_{k + 1}$ and $h_{k + 1}$
\STATE Generate $Q_{k + 1}$ to equal $G_{k + 1}^T B^{-1} G_{k + 1}$
\STATE $R_{k + 1} = H_{k + 1} + \frac{\tau}{2} \diag(Q_{k + 1}) + G_{k + 1}^T x_k$
\STATE $a_{k + 1} = a_k / r_d$
\LOOP
\IF{$a_{k + 1} < \tau$}
\STATE $a_{k + 1} = \tau$
\STATE $x_{k + 1} = x_k - \tau B^{-1} g_{k + 1}$
\STATE Break from loop
\ENDIF
\STATE $\lambda_{k + 1} := \lambda_{Q_{k + 1}, R_{k + 1}}(a_{k + 1} + \tau)$
\STATE $x_{k + 1} := x_k - a_{k + 1} B^{-1} G_{k + 1} \lambda_{k + 1}$
\IF {$f(x_{k + 1}) \leq -d_{Q_{k + 1}, R_{k + 1}}(a_{k + 1} + \tau, \lambda_{k + 1})$}
\STATE Break from loop
\ENDIF
\STATE $a_{k + 1} = a_k / r_u$
\ENDLOOP
\ENDFOR
\end{algorithmic}
\end{algorithm}

To analyze the convergence of Algorithm~\ref{label_057} we need the following result. For notational simplicity, we temporarily remove the indices when we consider each iteration separately.
\begin{proposition} \label{label_059}
If the point $x_+$ and the step size $a$ be generated by one iteration of Algorithm~\ref{label_057}, then
\begin{equation} \label{label_060}
\frac{1}{2}\| x_+ - y \|^2 \leq \frac{1}{2}\| \bar{x} - y \|^2 + a \left( f(y) - f(x_+) \right), \quad y \in \mathbb{E} .
\end{equation}
\end{proposition}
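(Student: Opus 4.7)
The plan is to read one iteration as an approximate minimization of the auxiliary quadratic $u_\lambda$ from \eqref{label_055}, covering both the line-search acceptance branch and the fallback $a=\tau$ branch within a single framework. I would first show that in either case there exists a vector $\lambda\in\Delta_{\tilde m}$ with $x_+ = \bar x - a B^{-1} G_{k+1}\lambda$ and $f(x_+)\le u_\lambda(x_+)$. In the accepting branch, $\lambda=\lambda_{k+1}$ is the subroutine output and, after substituting $x_+$ into \eqref{label_055} and using $\|x_+-\bar x\|^2 = a^2\langle\lambda, Q_{k+1}\lambda\rangle$, the value $u_\lambda(x_+)$ collapses to $-d_{Q_{k+1}, R_{k+1}}(a+\tau,\lambda)$, so the acceptance test is precisely $f(x_+)\le u_\lambda(x_+)$. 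In the fallback branch, $x_+=\bar x-\tau B^{-1} g_{k+1}$; placing all the mass of $\lambda$ on the record corresponding to $g_{k+1}$ and using $h_{k+1}=f(\bar x)-\langle g_{k+1},\bar x\rangle$ together with $(\diag Q_{k+1})_1=\|g_{k+1}\|_*^2$ gives $u_\lambda(x_+) = f(\bar x)-\tfrac{\tau}{2}\|g_{k+1}\|_*^2$, which upper bounds $f(x_+)$ by the standard descent lemma for the $L$-smooth function $f$ with step $1/L$.

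With this in hand, the next step is to observe that $u_\lambda$ is a strongly convex quadratic with Hessian $\tfrac{1}{a}B$ whose unique minimizer is precisely $x_+$ (verify by setting $\nabla_y u_\lambda(y) = G_{k+1}\lambda + \tfrac{1}{a}B(y-\bar x)=0$). The exact Taylor identity $u_\lambda(y) = u_\lambda(x_+) + \tfrac{1}{2a}\|y - x_+\|^2$ thus holds for every $y\in\mathbb E$. The global lower-bound property \eqref{label_054} then yields $u_\lambda(y) \le f(y) + \tfrac{1}{2a}\|y-\bar x\|^2$; the mismatch between the $\tau=1/L$ used by the algorithm and the $\tau_f=1/L_f$ appearing in \eqref{label_054} is harmless because Lemma~\ref{label_033} makes the correction $\tfrac{1}{2}(\langle\lambda,\diag Q_{k+1}\rangle-\langle\lambda,Q_{k+1}\lambda\rangle)$ nonnegative, so decreasing $\tau_f$ to $\tau$ only shrinks the right-hand side of \eqref{label_054}. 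Chaining the three facts,
\begin{equation*}
f(x_+) + \tfrac{1}{2a}\|y-x_+\|^2 \;\le\; u_\lambda(x_+) + \tfrac{1}{2a}\|y-x_+\|^2 \;=\; u_\lambda(y) \;\le\; f(y) + \tfrac{1}{2a}\|y-\bar x\|^2,
\end{equation*}
and multiplying by $a$ gives \eqref{label_060}.

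The main obstacle I expect is the unification of the two branches: the fallback step bypasses the quadratic subproblem entirely, so it must still be recast as a valid instance of the $f(x_+)\le u_\lambda(x_+)$ template under a canonical choice of $\lambda$. Once that canonical choice is made and the $\tau$-versus-$\tau_f$ point is settled via Lemma~\ref{label_033}, the remainder is a one-line computation using the Taylor expansion of $u_\lambda$ about its minimizer.
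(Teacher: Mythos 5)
Your proposal is correct and follows essentially the same route as the paper: establish $f(x_+)\le u_\lambda(x_+)$ in each branch, write $u_\lambda$ in canonical form about its minimizer $x_+$, bound $u_\lambda$ above by $f(y)+\tfrac{1}{2a}\|y-\bar x\|^2$ via the model's global lower-bound property, and chain. Your unified treatment of the fallback step as the $\lambda=e_1$ instance, and your explicit justification via Lemma~\ref{label_033} that replacing $\tau_f$ by the smaller $\tau=1/L$ preserves the lower bound in \eqref{label_054}, are careful touches that the paper's two-case proof leaves implicit.
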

\begin{proof}
We distinguish two cases. If line-search succeeds, then $f(x_+) \leq u_\lambda(x_+) = -d_{Q, R}(a + \tau_f, \lambda)$ where $\lambda$ is obtained from the subprocedure $\lambda_{Q,R}(a + \tau_f)$. The function $u_\lambda(y)$ is a parabola and hence can be written in canonical form as $u_\lambda(y) = u_\lambda(x_+) + \frac{1}{2 a} \| y - x_+ \|^2$ for all $y \in \mathbb{E}$, noting that $u^*_{\lambda} = u{_\lambda}(x_+)$.
On the other hand, \eqref{label_017} implies that $u_{\lambda}(y) \leq p(y) + \frac{1}{2 a} \| y - \bar{x} \|^2$.
From Proposition~\ref{label_021} and the line-search condition we have that
\begin{equation} \label{label_061}
f(y) + \frac{1}{2 a} \| y - \bar{x} \|^2 \geq u_{\lambda}(y) \geq f(x_+) + \frac{1}{2 a} \| y - x_+ \|^2 .
\end{equation}
Rearranging \eqref{label_061} gives \eqref{label_060}.

When line-search fails, we have $a = \tau_f$ and $f(x_+) \leq u_{\bar{x}}(\bar{x})$ where
\begin{equation}
x_+ = \argmin_{y \in \mathbb{E}} \left\{ u_{\bar{x}}(y) \defq f(\bar{x}) + \langle \grad{f}(\bar{x}), y - \bar{x}\rangle + \frac{1}{2 \tau_f} \| y - \bar{x} \|^2 \right\}.
\end{equation}
Using the same reasoning in the first case, we obtain \eqref{label_060}.
\end{proof}
Proposition~\ref{label_059} implies, in the same manner as shown in Theorem 2.3 in \cite{ref_008}, that the worst-case rate of Algorithm~\ref{label_057} is given by
\begin{equation}
f(x_k) - f^* \leq \frac{\| x_0 - x^*\|^2}{2 A_k} \leq \frac{L \| x_0 - x^*\|^2}{2 k}, \quad k \geq 1, \quad x^* \in X^*,
\end{equation}
where
where $A_k \defq \sum_{i = 1}^{k} a_i$ is the convergence guarantee and $L \geq L_f$ is the value of the Lipschitz constant fed to the algorithm.

\section{An Optimized Gradient Method with Memory} \label{label_062}

We now to turn our attention to the application of our bound in conjunction with acceleration. The update rules of accelerated schemes can be obtained using the machinery of estimate functions (see, e.g.,\cite{ref_018,ref_020,ref_008}). These functions comprise a global lower bound at the optimum, incorporating all the relevant oracle information obtained up to that point, and a strongly convex term that can be made arbitrarily small. The estimate function lower bounds are constructed by putting together, either as a convex combination~\cite{ref_018,ref_020} or a limit thereof~\cite{ref_008}, simple global lower bounds, each usually obtained using the oracle information of a single iteration. Our optimal bound does not appear to be applicable directly to this framework. Thus, to fully exploit the theoretical properties of our bound in conjunction with acceleration, we need to devise a new mathematical object.

\subsection{Primal-dual estimate functions} \label{label_063}

Whereas our bound lacks separability when viewed in the dual form given by \eqref{label_017}, it \emph{does} separate into the simple terms \eqref{label_006}, discussed in Subsection~\ref{label_003}, when seen in the primal form \eqref{label_013}. To accommodate the primal-dual bounds, that are additionally parameterized by the dual variable $g$, we introduce the primal-dual estimate functions having the following structure:
\begin{equation} \label{label_064}
\psi_k(y, g) = A_k W_k(y, g) + \frac{1}{2} \| y - x_0 \|^2, \quad k \geq 0,
\end{equation}
where $W_k$ is a function jointly convex in $y \in \mathbb{E}$ and $g \in \mathbb{E}^*$, being additionally strongly convex or constant in $g$. For each $k \geq 0$, there exists a $g_k^* \in \mathbb{E}^*$ such that
\begin{equation} \label{label_065}
W_k(x^*, g_k^*) \leq f(x^*).
\end{equation}
Here, $A_k$ is the convergence guarantee and $x^*$ is a specific member of $X^*$, which we consider fixed. The estimate function originally used to derive FGM in \cite{ref_018} actually enforced $A_0 > 0$ and some accelerated schemes introduced afterwards also considered this scenario (see, e.g., \cite{ref_010}). However, for simplicity, we assume in this work that $A_0 = 0$. For this reason, we do not define $W_0$.

We also denote the estimate function optima $x$ and $g$ as $v_k$ and $s_k$, respectively. Our assumptions on $W_k$ ensure their existence, with $v_k$ being unique, for $k \geq 0$. When $k = 0$ we have $v_0 = x_0$ and we set $s_0 = 0$, the zero vector in $\mathbb{E}^*$.

A sufficient condition that ensures algorithmic convergence is the estimate sequence property (ESP), now given by
\begin{equation} \label{label_066}
A_k f(x_k) \leq \psi_k^* \defq \min_{y \in \mathbb{E}, g \in \mathbb{E}^*} \psi_k(y, g) = \psi_k(v_k, s_k), \quad k \geq 1,
\end{equation}
where $x_k$ is the main iterate of the algorithm. The proof follows directly from the above definitions
\begin{equation} \label{label_067}
\begin{aligned}
A_k f(x_k) &\leq \psi_k^* \overset{\eqref{label_066}}{\leq} \psi_k(x^*, g_k^*) \overset{\eqref{label_064}}{=} A_k W_k(x^*, g_k^*) + \frac{1}{2} \| x^* - x_0 \|^2 \\
&\overset{\eqref{label_065}}{\leq} A_k f(x^*) + \frac{1}{2} \| x_0 - x^* \|^2, \quad x \in X^*, \quad k \geq 1.
\end{aligned}
\end{equation}

\subsection{Majorization-minimization} \label{label_068}

Smooth objectives are particularly appropriate for MM algorithms because the Lipschitz gradient property in \eqref{label_005} specifies the existence of a parabolic majorant at every point. The most straightforward means of applying the MM framework to a first-order scheme can be found in the Gradient Method, where every new iterate can be considered to be given by the optimum of the majorant in \eqref{label_005} at its predecessor. In the same manner as the Fast Gradient Method (FGM), we relax this condition to allow the main iterate $x_k$ at every iteration $k$ to be the optimum of the majorant $u_k$ at an auxiliary point $y_k$. The update rule for all $k \geq 1$ becomes
\begin{align}
x_k &= \arg\min_{y \in \mathbb{E}} \left\{u_k \defq f(y_k) + \langle \grad{f}(y_k), y - y_k \rangle + \frac{L_f}{2} \| y - y_k \|^2 \right\} \label{label_069} \\
& = y_k - \frac{1}{L_f} B^{-1} \grad{f}(y_k). \label{label_070}
\end{align}
As in the case of FGM, we consider the first step to be a gradient step and thus have $y_1 = x_0$. For convenience, we also set $y_0 = x_0$, although this will never actually be used by any algorithm.

From \eqref{label_005} and \eqref{label_070} we obtain the well-known descent rule
\begin{equation} \label{label_071}
f(x_k) \leq f(y_k) - \frac{1}{2 L_f} \| \grad{f}(y_k) \|_*^2, \quad k \geq 1.
\end{equation}
This rule provides an stricter alternative to the estimate sequence property in \eqref{label_066} in the form of
\begin{equation} \label{label_072}
A_k \left( f(y_k) - \frac{1}{2 L_f} \| \grad{f}(y_k) \|_*^2 \right) \leq \psi_k^*, \quad k \geq 1.
\end{equation}
Since \eqref{label_072} implies \eqref{label_066}, \eqref{label_072} guarantees convergence according to \eqref{label_067}. It is however less computationally complex, as $\grad{f}(y_k)$ may share subexpressions and can be computed concurrently with $\grad{f}(y_k)$.

We introduce some additional notation. Let $\Gamma_k$ be the estimate function gaps in \eqref{label_072}, given by $\Gamma_k \defq \psi_k^* - A_k f(y_k) + \frac{A_k}{2 L_f} \| \grad{f}(y_k) \|_*^2$, $k \geq 1$. Therefore, \eqref{label_072} ensures the nonnegativity of the gaps.

\subsection{A simple optimal scheme} \label{label_073}

Next, we seek to determine an update rule for $y_k$ that renders our method optimal with respect to the available collected information.

To this end we employ the algorithmic design pattern presented in \cite{ref_010,ref_005}. Instead of obtaining the global lower bounds in the estimate functions by weighted averaging simple \emph{primal} lower bounds, we choose \emph{primal-dual} bounds. Namely, the estimate function update now needs to satisfy
\begin{equation} \label{label_074}
\psi_{k + 1}(y, g) = \psi_k(y, g) + a_{k + 1} w_{k + 1}(y, g), \quad y \in \mathbb{E}, \quad g \in \mathbb{E}^*, \quad k \geq 0,
\end{equation}
where
\begin{equation} \label{label_075}
w_{k + 1}(y, g) = f(y_{k + 1}) + \langle \grad{f}(y_{k + 1}), y - y_{k + 1} \rangle + \frac{1}{2 L_f} \| g - \grad{f}(y_{k + 1}) \|_*^2.
\end{equation}
In the context of this simple method we have $A_{k + 1} = A_k + a_{k + 1}$ with $a_{k + 1} > 0$. We have now made all the necessary assumptions needed to construct an optimal method.

The simple lower bounds in \eqref{label_075} are linear in $x$ and parabolic in $g$ and we therefore have the following canonical form for all estimate functions
\begin{equation} \label{label_076}
\psi_k(y, g) = \psi_k^* + \frac{1}{2} \| y - v_k \|^2 + \frac{A_k}{2 L_f} \| g - s_k \|_*^2, \quad k \geq 0.
\end{equation}
We substitute the simple lower bound in \eqref{label_075} and the canonical form of the estimate function in \eqref{label_076} within \eqref{label_074} and obtain the following expression that holds for all $k \geq 0$, from which \emph{all} update rules can be derived:
\begin{equation} \label{label_077}
\begin{gathered}
\psi_{k + 1}^* + \frac{1}{2} \| y - v_{k + 1}\|^2 + \frac{A_{k + 1}}{2 L_f} \| g - s_{k + 1} \|_*^2 = \psi_k^* + \frac{1}{2} \| y - v_{k}\|^2 \\ + \frac{A_k}{2 L_f} \| g - s_{k} \|_*^2 + a_{k + 1} \left( f(y_{k + 1}) + \langle \grad{f}(y_{k + 1}), y - y_{k + 1} \rangle + \frac{1}{2 L_f} \| g - \grad{f}(y_{k + 1}) \|_*^2 \right).
\end{gathered}
\end{equation}

First, by differentiating \eqref{label_077} with respect to $y$ we obtain
\begin{equation} \label{label_078}
v_{k + 1} = v_k - a_{k + 1} B^{-1} \grad{f}(y_{k + 1}), \quad k \geq 0.
\end{equation}
The remainder of the update rules can be easily obtained from the following result.
\begin{theorem} \label{label_079}
For any algorithm that employs primal dual estimate functions updated as in \eqref{label_074} using the simple lower bounds given by \eqref{label_075}, we have
\begin{equation}
\begin{gathered}
\Gamma_{k + 1} \geq \Gamma_k + \left( \frac{A_k + A_{k + 1}}{2 L_f} - \frac{a_{k + 1}^2}{2} \right) \| \grad{f}(y_{k + 1}) \|_*^2 \\ + \left\langle \grad{f}(y_{k + 1}), A_k \left(y_k - \frac{1}{L_f}B^{-1}\grad{f}(y_k)\right) + a_{k + 1} v_k - A_{k + 1} y_{k + 1} \right\rangle, \quad k \geq 0,
\end{gathered}
\end{equation}
where $A_{k + 1} \defq A_k + a_{k + 1}$.
\end{theorem}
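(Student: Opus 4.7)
The plan is to compute $\psi_{k+1}^*$ in closed form from \eqref{label_074} using the canonical expression \eqref{label_076}, and then substitute the result into the definition of $\Gamma_{k+1}$. Since the update in \eqref{label_077} is a sum of one $y$-dependent and one $g$-dependent quadratic, the minimization separates. Minimizing in $y$ recovers \eqref{label_078} and leaves the residual $-\frac{a_{k+1}^2}{2}\|\grad{f}(y_{k+1})\|_*^2 + a_{k+1}\langle \grad{f}(y_{k+1}), v_k - y_{k+1}\rangle$, while minimizing in $g$ (by the standard identity combining two squared-norm parabolae) gives the relation $A_{k+1} s_{k+1} = A_k s_k + a_{k+1}\grad{f}(y_{k+1})$ together with the residual $\frac{A_k a_{k+1}}{2 L_f A_{k+1}}\|s_k - \grad{f}(y_{k+1})\|_*^2$.

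Collecting these ingredients and inserting them into $\Gamma_{k+1} = \psi_{k+1}^* - A_{k+1} f(y_{k+1}) + \frac{A_{k+1}}{2 L_f}\|\grad{f}(y_{k+1})\|_*^2$, I would then eliminate $\psi_k^*$ via its definition $\psi_k^* = \Gamma_k + A_k f(y_k) - \frac{A_k}{2 L_f}\|\grad{f}(y_k)\|_*^2$. The resulting identity expresses $\Gamma_{k+1}$ exactly in terms of $\Gamma_k$, the three inner products already visible on the right-hand side of the target, the residual in $s_k - \grad{f}(y_{k+1})$, and the single unresolved quantity $A_k\bigl(f(y_k) - f(y_{k+1})\bigr)$.

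The key step is to bound this last difference using the Bregman-like inequality for smooth convex functions, which is exactly what Proposition~\ref{label_009} delivers when specialized to the singleton $\mathcal{Z} = \{y_{k+1}\}$ and evaluated at $y = y_k$, $g = \grad{f}(y_k)$: it yields
\begin{equation} \nonumber
f(y_k) \geq f(y_{k+1}) + \langle \grad{f}(y_{k+1}), y_k - y_{k+1}\rangle + \frac{1}{2 L_f}\|\grad{f}(y_k) - \grad{f}(y_{k+1})\|_*^2.
\end{equation}
Plain convexity is insufficient here; the quadratic remainder is precisely what upgrades the coefficient $\frac{A_{k+1}}{2 L_f}$ on $\|\grad{f}(y_{k+1})\|_*^2$ to $\frac{A_k + A_{k+1}}{2 L_f}$ (via an additional $+\frac{A_k}{2 L_f}\|\grad{f}(y_{k+1})\|_*^2$) and supplies the cross-term $-\frac{A_k}{L_f}\langle \grad{f}(y_{k+1}), B^{-1}\grad{f}(y_k)\rangle$ needed to complete $\langle \grad{f}(y_{k+1}), A_k(y_k - \frac{1}{L_f}B^{-1}\grad{f}(y_k))\rangle$ inside the stated inner product.

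To finish, the $-\frac{A_k}{2 L_f}\|\grad{f}(y_k)\|_*^2$ coming from $\psi_k^*$ cancels with the $+\frac{A_k}{2 L_f}\|\grad{f}(y_k)\|_*^2$ produced by expanding the Bregman remainder; the inner-product contributions group as $\langle \grad{f}(y_{k+1}), A_k(y_k - \tfrac{1}{L_f}B^{-1}\grad{f}(y_k)) + a_{k+1} v_k - A_{k+1} y_{k+1}\rangle$ using $A_{k+1} = A_k + a_{k+1}$; and the leftover $\frac{A_k a_{k+1}}{2 L_f A_{k+1}}\|s_k - \grad{f}(y_{k+1})\|_*^2$ is nonnegative and can be dropped, yielding the stated inequality. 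The main obstacle is exactly the third step: recognizing that one must invoke Proposition~\ref{label_009} (which packages the smooth-convex Bregman inequality implicit in \eqref{label_011}) rather than mere convexity, since otherwise both the $\frac{A_k}{2 L_f}\|\grad{f}(y_{k+1})\|_*^2$ term and the $B^{-1}\grad{f}(y_k)$ cross-term inside the inner product would be missing.
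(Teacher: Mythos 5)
Your proposal is correct and follows essentially the same route as the paper: the separable minimization in $y$ and $g$ reproduces exactly the paper's $\mathcal{V}_{k+1}$ and $\mathcal{S}_{k+1}$ terms (with the nonnegative $g$-residual $\frac{A_k a_{k+1}}{2 L_f A_{k+1}}\|s_k - \grad{f}(y_{k+1})\|_*^2$ being the slack that is dropped), and your key step — the inequality $f(y_k) \geq w_{k+1}(y_k, \grad{f}(y_k))$ obtained from Proposition~\ref{label_009} with the singleton $\{y_{k+1}\}$ — is precisely the paper's inequality \eqref{label_087}. The only cosmetic difference is that you compute $\psi_{k+1}^*$ by direct closed-form minimization whereas the paper evaluates the update identity at $(y_{k+1}, \grad{f}(y_{k+1}))$ and differentiates; these are algebraically equivalent.
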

\begin{proof}
Herein we consider all $k \geq 0$, unless specified otherwise.
Taking $x = y_{k + 1}$, $g = \grad{f}(y_{k + 1})$ in \eqref{label_077}, noting that $w_k(y_k, \grad{f}(y_k)) = f(y_k)$, we obtain
\begin{equation} \label{label_080}
\begin{gathered}
\psi_{k + 1}^* + \frac{1}{2} \| y_{k + 1} - v_{k + 1}\|^2 + \frac{A_{k + 1}}{2 L_f} \| \grad{f}(y_{k + 1}) - s_{k + 1} \|_*^2 \\= \psi_k^* + \frac{1}{2} \| y_{k + 1} - v_{k}\|^2 + \frac{A_k}{2 L_f} \| \grad{f}(y_{k + 1}) - s_{k} \|_*^2 + a_{k + 1} f(y_{k + 1}).
\end{gathered}
\end{equation}
Expanding and rearranging terms in \eqref{label_080} produces
\begin{equation} \label{label_081}
\begin{gathered}
\psi_{k + 1}^* - A_{k + 1} f(y_{k + 1}) + \frac{A_{k + 1}}{2 L_f} \| \grad{f}(y_{k + 1}) \|_*^2 = \psi_k^* - A_k f(y_{k})\\
+ A_k ( f(y_{k}) - f(y_{k + 1}) ) + \frac{A_{k + 1}}{2 L_f} \| \grad{f}(y_{k + 1}) \|_*^2 + \mathcal{S}_{k + 1}
+ \mathcal{V}_{k + 1},
\end{gathered}
\end{equation}
where $\mathcal{S}_{k + 1}$ and $\mathcal{V}_{k + 1}$ are, respectively defined as
\begin{align}
\mathcal{S}_{k + 1} &\defq \frac{A_{k}}{2 L_f} \| \grad{f}(y_{k + 1}) - s_{k} \|_*^2 - \frac{A_{k + 1}}{2 L_f} \| \grad{f}(y_{k + 1}) - s_{k + 1} \|_*^2, \label{label_082} \\
\mathcal{V}_{k + 1} &\defq \frac{1}{2} \| y_{k + 1} - v_{k}\|^2 - \frac{1}{2} \| y_{k + 1} - v_{k + 1}\|^2 \label{label_083} .
\end{align}
Differentiating \eqref{label_077} with respect to $g$ also taking $x = y_{k + 1}$, $g = \grad{f}(y_{k + 1})$ yields
\begin{equation} \label{label_084}
A_{k + 1} (\grad{f}(y_{k + 1}) - s_{k + 1}) = A_k (\grad{f}(y_{k + 1}) - s_{k}).
\end{equation}
From \eqref{label_084} it follows that $\mathcal{S}_{k + 1}$ is always nonnegative, namely
\begin{equation} \label{label_085}
\mathcal{S}_{k + 1} = \left(\frac{A_k}{2 L_f} - \frac{A_k^2}{2 L_f A_{k + 1}} \right) \| \grad{f}(y_{k + 1}) - s_k\|_*^2 = \frac{a_{k + 1} A_k}{2 L_f A_{k + 1}} \| \grad{f}(y_{k + 1}) - s_k\|_*^2 \geq 0 .
\end{equation}
Using \eqref{label_078} we can refactor $\mathcal{V}_{k + 1}$ as
\begin{equation} \label{label_086}
\begin{aligned}
\mathcal{V}_{k + 1} &= \frac{1}{2} \langle (y_{k + 1} - v_{k}) - (y_{k + 1} - v_{k + 1}), B \left( (y_{k + 1} - v_{k}) + (y_{k + 1} - v_{k + 1}) \right) \rangle \\
&= - \frac{1}{2} \langle a_{k + 1} B^{-1} \grad{f}(y_{k + 1}), B( 2 (y_{k + 1} - v_{k}) ) + a_{k + 1} B^{-1} \grad{f}(y_{k + 1}) \rangle \\
&= \langle \grad{f}(y_{k + 1}), a_{k + 1} v_{k} - a_{k + 1} y_{k + 1} \rangle - \frac{a^2_{k + 1}}{2} \| \grad{f}(y_{k + 1}) \|_*^2.
\end{aligned}
\end{equation}

On the other hand, multiplying $f(y_k) \geq w_{k + 1}\left(y_k, \grad{f}(y_{k})\right)$ by $A_k$ (when $k = 0$ this amounts to $0 \geq 0$ without having $y_0$ defined) and expanding terms gives
\begin{equation} \label{label_087}
\begin{gathered}
A_k( f(y_{k}) - f(y_{k + 1}) ) \geq \langle \grad{f}(y_{k + 1}), A_k y_k - A_k y_{k + 1} \rangle + \frac{A_k}{2 L_f} \| \grad{f}(y_{k}) \|_*^2 \\ + \frac{A_k}{2 L_f} \| \grad{f}(y_{k + 1}) \|_*^2 - \left\langle \grad{f}(y_{k + 1}), \frac{A_k}{L_f} B^{-1} \grad{f}(y_{k}) \right\rangle.
\end{gathered}
\end{equation}

Applying \eqref{label_085}, \eqref{label_086} and \eqref{label_087} in \eqref{label_081} and regrouping terms gives the desired result.
\end{proof}

There is an outstanding resemblance between Theorem~\ref{label_079} and \cite[Theorem 6]{ref_005}, especially concerning the update rules. Following the design procedure of the Accelerated Composite Gradient Method (ACGM) in \cite{ref_005}, we can ensure that Theorem~\ref{label_079} holds for any algorithm satisfying \eqref{label_070} as well as the following update rules for all $k \geq 0$:
\begin{gather}
A_{k + 1} y_{k + 1} = A_k x_k + a_{k + 1} v_k, \label{label_088} \\
L_f a_{k + 1}^2 \leq 2 A_k + a_{k + 1}. \label{label_089}
\end{gather}
Having fixed the main iterate update in \eqref{label_070}, we see it reappear in \eqref{label_088}. Under the assumption in \eqref{label_070}, the update rule for the auxiliary point is identical to the one derived in \cite[Theorem 6]{ref_005}, which in turn matches the one in the original formulation of the Fast Gradient Method \cite{ref_017}.

The estimate function lower bound is given by $W_k(y, g) = \frac{1}{A_k} \sum_{i = 1}^{k} a_i w_i(y, g)$, $y \in \mathbb{E}$, $g \in \mathbb{E}^*$, $k \geq 1$ satisfying $W_k(x^*, 0) \leq f(x^*)$ for all $x^* \in X^*$. Combining \eqref{label_067} with \eqref{label_071} we have that
\begin{equation}
f(x_k) - f(x^*) \leq \frac{1}{A_k} \psi_k^* - f(x^*) \leq \frac{1}{2 A_k} \|x_0 - x^*\|^2, \quad k \geq 1.
\end{equation}
The largest convergence guarantees are obtained by enforcing equality in \eqref{label_089}, thereby obtaining
\begin{equation} \label{label_090}
a_k = \frac{k}{L_f}, \quad A_k = \frac{k (k + 1)}{2 L_f}, \quad k \geq 1,
\end{equation}
yielding a worst-case rate of
\begin{equation} \label{label_091}
f(x_k) - f(x^*) \leq \frac{ L_f \|x_0 - x^*\|^2}{k(k + 1)}, \quad x \in X^*, \quad k \geq 1.
\end{equation}
This rate, even up to the proportionality constant, may be the best attainable on this problem class by a black-box method~\cite{ref_013,ref_003}. In fact, our simple optimal method closely resembles the Optimized Gradient Method~\cite{ref_004,ref_012,ref_002}, previously derived using the Performance Estimation Framework.

However, unlike in FGM and extensions such as ACGM, the rigid nature of the estimate function gaps in Theorem~\ref{label_079} and the weight update in \eqref{label_089} hinder the direct application of fully-adaptive line-search, wherein the Lipschitz estimates can both increase and decrease. Instead, we can increase the convergence guarantee $A_k$ directly, by closing the gap in \eqref{label_072}. Moreover, by relaxing the assumption in \eqref{label_074} we can increase $A_k$ further.

\subsection{Adding memory to the Optimized Gradient Method}

We construct a memory model similar to the one discussed in Section~\ref{label_053}. We store $\tilde{m}$ records in the pair $\bar{H} \in \mathbb{R}^{\tilde{m}}$ and $\tilde{G} \in \mathbb{E} \times \mathbb{R}^{\tilde{m}}$. Based on the findings in Subsection~\ref{label_040}, we propose normalized estimate functions with memory taking for all $y \in \mathbb{E}$ and $g \in \mathbb{E}^*$ the form $\tilde{\omega}(y, g) = \max_{\lambda \in \Delta_{\tilde{m}}} \omega_{\lambda}(y, g)$ where
\begin{equation} \label{label_092}
\omega_{\lambda}(y, g) \defq \langle \bar{H} + \tilde{G}^T (y - \tau_f B^{-1} g), \lambda \rangle + \frac{\tau_f}{2} \| g \|_*^2 + \frac{1}{2 A} \| y - x_0 \|^2, \quad \lambda \in \Delta_{\tilde{m}}.
\end{equation}
Note that the normalized functions require $A > 0$ and cannot be used in the first iteration. We can ensure that $A \tilde{\omega}$ is a valid primal-dual estimate function, according to the criteria outlined in Subsection~\ref{label_063} if there exists a linear transform $\mathcal{T} \in \mathbb{R}^{m} \times \mathbb{R}^{\tilde{m}}$, with $\mathcal{T}_i \in \Delta_{\tilde{m}}$ for every $i = 1, ..., \tilde{m}$, such that $\bar{H} = -\mathcal{T}^T \bar{f}_*$ and $\tilde{G} = G \mathcal{T}$.

The verification of the stricter estimate sequence property in \eqref{label_072} requires the calculation of $\omega^*$, which, by means of strong duality becomes
\begin{align}
\omega^* &= \min_{y \in \mathbb{E}, g \in \mathbb{E}^*} \max_{\lambda \in \Delta_{\tilde{m}}} \omega_{\lambda}(y, g) = \max_{\lambda \in \Delta_{\tilde{m}}} \min_{y \in \mathbb{E}, g \in \mathbb{E}^*} \omega_{\lambda}(y, g) \\
&= \max_{\lambda \in \Delta_{\tilde{m}}} \left\{\langle \bar{H} + \tilde{G}^T x_0, \lambda \rangle - \frac{A + \tau_f}{2} \langle \lambda, Q \lambda \rangle \right\}, \label{label_093}
\end{align}
where $Q = \tilde{G}^T B^{-1} \tilde{G}$. By taking the minus sign in the objective of \eqref{label_093}, this auxiliary problem becomes the Quadratic Program $\min_{\lambda \in \Delta_{\tilde{m}}} d_{Q, S}(\lambda, A + \tau_f)$ where $S \defq \bar{H} + \tilde{G}^T x_0$.

Despite its simplicity, the auxiliary problem cannot be solved exactly and we consider the approximate solution $\lambda$ returned by a subprocedure $\lambda_{Q, S}(A + \tau_f)$ and the resulting function $\omega_{\lambda}$, which we shorten to $\omega$, with $\omega^* = -d_{Q, S}(\lambda, A + \tau_f)$. The only requirement we place on the subprocedure $\lambda_{Q, S}(A + \tau_f)$ is that it should output a vector $\lambda$ giving an objective value no worse that the value at the starting point. Most estimate sequence based auxiliary optimization schemes, including higher order ones, satisfy this condition by default.

The increase of the convergence guarantee can be accomplished iteratively using Newton's rootfinding algorithm as previously shown in \cite{ref_008}. Let the ESP gap in \eqref{label_072} be given by $A \gamma(A)$ with $\gamma(A) \defq \omega^* - \bar{e}$, where $\bar{e} \defq f(\bar{y}) - \frac{\tau_f}{2} \| \grad{f}(\bar{y}) \|_*^2$. Increasing the convergence guarantee to the highest level possible amounts to seeking the zero intercept of $\gamma$. Using Danskin's lemma we obtain the simple Newton update rule
$A^{(t + 1)} = A^{(t)} + 2 \frac{\gamma(A^{(t)})}{\langle \lambda(A^{(t)}), Q \lambda(A^{(t)}) \rangle}$,
which is identical in form with the one used by AGMM in \cite{ref_008}. The inexactness in obtaining $\lambda(A)$ invalidates the convergence guarantees of the rootfinding scheme but as long as the gaps are positive the updates do increase $A$. The increase can simply be halted when the gap is no longer positive.

At the starting state, the main algorithm does not have a history of oracle calls available and instead performs a GM step. We thus have
\begin{equation}
\omega_1(y, g) = f(x_0) + \langle \grad{f}(x_0), y - x_0 - \tau_f B^{-1} g \rangle + \frac{\tau_f}{2} \| g \|_*^2 + \frac{1}{2 A_1} \| y - x_0 \|^2,
\end{equation} yielding $\omega^* = f(x_0) - \frac{A_1}{2} \| \grad{f}(x_0) \|_*^2$. The ESP in \eqref{label_072} is satisfied with equality for $A_1 = a_1 = \tau_f$.

When $k \geq 1$, at the beginning of each iteration $k$, we have the previous normalized estimate function $\omega_k$ already computed, along with $\lambda_k$ and $A_k$. Function $\omega_k$ is a parabola in $y$ determined uniquely by 3 parameters: the scalars $A_k$ and $h_k \defq \langle \lambda_k, \bar{H}_k \rangle$ as well as the vector $g_k \defq G_k \lambda_k$. We write $\omega_k$ as
\begin{equation} \label{label_094}
\omega_k(y, g) = h_k + \langle g_k, y - \tau_f B^{-1} g \rangle + \frac{\tau_f}{2} \| g \|_*^2 + \frac{1}{2 A_k} \| y - x_0 \|^2, \quad y \in \mathbb{E}, \quad g \in \mathbb{E}^*.
\end{equation}

To create the next iterate, we obtain the new weight $a_{k + 1}$ as in the memory-less case using $L_f a_{k + 1}^2 = 2 A_k + a_{k + 1}$ and compute the auxiliary point $y_{k + 1}$ using \eqref{label_088} where $v_k$ is now obtained from $(v_k, s_k) \defq \argmin_{y \in \mathbb{E}, g \in \mathbb{E}^*} \omega_k(y, g)$ and is given by $v_k = x_0 - A_{k} B^{-1} g_k \lambda_k$. We next create a majorant $u_{k + 1}$ using \eqref{label_069}, the minimizer of which is the next iterate, as given by \eqref{label_070}.

Now we can construct the next normalized estimate function. As in AGMM~\cite{ref_008}, we impose constraints on the first two entries in the new model:
\begin{equation} \label{label_095}
\begin{aligned}
H_{k + 1}^{(1)} &= h_k,& H_{k + 1}^{(2)} &= \bar{h}_{k + 1} \defq f(y_{k + 1}) - \langle \grad{f}(y_{k + 1}), y_{k + 1} \rangle + \frac{\tau_f}{2} \| \grad{f}(y_{k + 1}) \|_*^2, \\
G_{k + 1}^{(1)} &= g_k,& G_{k + 1}^{(2)} &= \bar{g}_{k + 1} \defq \grad{f}(y_{k + 1}).
\end{aligned}
\end{equation}
The worst $\lambda_{k + 1}$ that we can afford is likewise given by
\begin{equation} \label{label_096}
(A_k + a_{k + 1}) \lambda_{k + 1}^{(0)} = (A_k, a_{k + 1}, 0, ..., 0)^T.
\end{equation}
We choose $\lambda_{k + 1}^{(0)}$ as the starting point of subprocedure $\lambda_{Q_{k + 1}, S_{k + 1}}(A_{k + 1} + \tau_f)$. The resulting method is listed in Algorithm~\ref{label_097}, which in turn calls the convergence guarantee adjustment procedure listed in Algorithm~\ref{label_099}.

\begin{algorithm}[h]
\caption{An Optimized Gradient Method with Memory}
\label{label_097}
\begin{algorithmic}[1]
\STATE \textbf{Input:} $B \succ 0$, $x_0 \in \mathbb{E}$, $L \geq L_f$, $T \in \{1, 2, ..., +\infty\}$
\STATE $v_0 = x_0$, $A_0 = 0$, $\tau = 1 / L$
\FOR{$k = 0,\ldots{},T-1$}
\STATE $a_{k + 1} = \frac{\tau}{2}\left(1 + \sqrt{1 + 4 L A_k} \right)$ \label{label_098}
\STATE $y_{k + 1} = \frac{1}{A_k + a_{k + 1}}(A_k x_k + a_{k + 1} v_k)$\\[1mm]
\STATE $f_{k + 1} = f(y_{k + 1})$ \\[1mm]
\STATE $\bar{g}_{k + 1} = \grad{f}(y_{k + 1})$\\[1mm]
\STATE $x_{k + 1} = y_{k + 1} - \tau B^{-1} \bar{g}_{k + 1}$\\[2mm]
\STATE $\bar{h}_{k + 1} = f_{k + 1} - \langle \bar{g}_{k + 1}, y_{k + 1} \rangle + \frac{\tau}{2} \| \bar{g}_{k + 1} \|_*^2$\\[2mm]
\STATE $e_{k + 1} = f_{k + 1} - \frac{\tau}{2} \| \bar{g}_{k + 1} \|_*^2$\\[1mm]
\IF {$k = 0$}
\STATE $\bar{H}_1 = \bar{h}_{1}$, $G_1 = \bar{g}_1$, $Q_1 = \| \bar{g}_1 \|_*^2$, $S_1 = f_{1} + \frac{\tau}{2} \| \bar{g}_1 \|_*^2$, $\lambda_{1} = 1$, $A_1 = a_1$
\ELSE
\STATE Generate $\bar{H}_{k + 1}$ and $G_{k + 1}$ to satisfy \eqref{label_095}
\STATE Generate $Q_{k + 1}$ to equal $G_{k + 1}^T B^{-1} G_{k + 1}$
\STATE Generate $S_{k + 1}$ to equal $\bar{H}_{k + 1} + G_{k + 1}^T x_0$
\STATE $\lambda^{(0)}_{k + 1} = \frac{1}{A_k + a_{k + 1}}(A_k, a_{k + 1}, 0, ..., 0)^T$
\STATE $\lambda_{k + 1}, A_{k + 1} = \operatorname{Newton}(\tau, Q_{k + 1}, S_{k + 1}, \bar{e}_{k + 1}, \lambda^{(0)}_{k + 1}, A_k + a_{k + 1})$
\ENDIF
\STATE $h_{k + 1} = \langle \bar{H}_{k + 1}, \lambda_{k + 1} \rangle$
\STATE $g_{k + 1} = G_{k + 1} \lambda_{k + 1}$
\STATE $v_{k + 1} = x_0 - A_{k + 1} B^{-1} g_{k + 1}$
\ENDFOR
\end{algorithmic}
\end{algorithm}

\begin{algorithm}[h]
\caption{Newton$(\tau, Q, S, \bar{e}, \lambda^{(0)}, A^{(0)})$}
\label{label_099}
\begin{algorithmic}[1]
\STATE $\lambda_{\operatorname{valid}} := \lambda^{(0)}$
\STATE $A_{\operatorname{valid}} := A := A^{(0)}$
\FOR{$t = 0, \ldots{}, N - 1$}
\STATE $\lambda := \lambda_{Q, S}(A + \tau)$ with starting point $\lambda^{(0)}$
\STATE $\omega^* := \langle S, \lambda \rangle - \frac{A}{2} \langle \lambda, Q \lambda \rangle$
\IF {$\omega^* < e$}
\STATE Break from loop
\ENDIF
\STATE $\lambda_{\operatorname{valid}} := \lambda$
\STATE $A_{\operatorname{valid}} := A$
\STATE $A := A + 2 \frac{\omega^* - e}{\langle \lambda, Q \lambda \rangle}$
\ENDFOR
\RETURN $\lambda_{\operatorname{valid}}, A_{\operatorname{valid}}$
\end{algorithmic}
\end{algorithm}

The main result governing the convergence of Algorithm~\ref{label_097} is as follows.
\begin{proposition} \label{label_100}
Let the worst-case estimate function $\bar{\omega}_{k + 1}$, created during iteration $k \geq 1$, be given for all $y \in \mathbb{E}$ and $g \in \mathbb{E}^*$ by
\begin{equation} \label{label_101}
\bar{\omega}_{k + 1}(y, g) = \left\langle \bar{H}_{k + 1} + G_{k + 1}^T \left(y - \tau_f B^{-1} g \right), \lambda_{k + 1}^{(0)} \right\rangle + \frac{\tau_f}{2} \| g \|_*^2 + \frac{1}{2 \bar{A}_{k + 1}} \| y - x_0 \|^2,
\end{equation}
where $\bar{A}_{k + 1} = A_k + a_{k + 1}$. If the previous ESP holds as given by \eqref{label_072}, then next ESP also holds, even in the worst-case, namely
\begin{equation}
f(y_{k + 1}) - \frac{\tau_f}{2} \| \grad{f}(y_{k + 1}) \|_*^2 \leq \bar{\omega}_{k + 1}^*.
\end{equation}
\end{proposition}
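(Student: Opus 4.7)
The plan is to reduce Proposition~\ref{label_100} to Theorem~\ref{label_079} by identifying $\bar{A}_{k+1}\bar{\omega}_{k+1}$ as the primal-dual estimate function produced from $A_k\omega_k$ via the additive update \eqref{label_074} with simple bound \eqref{label_075} and weight $a_{k+1}$. Under this identification, the assumed ESP \eqref{label_072} at iteration $k$ applied to $\psi_k \defq A_k\omega_k$ says precisely that the gap $\Gamma_k$ of Subsection~\ref{label_068} is nonnegative, and the conclusion of the proposition (after multiplying through by $\bar{A}_{k+1}$) is exactly $\bar{\Gamma}_{k + 1} \geq 0$ for the worst-case gap induced by $\psi_{k+1} \defq \bar{A}_{k+1}\bar{\omega}_{k+1}$.

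The key identity to verify is
\begin{equation*}
\bar{A}_{k+1}\bar{\omega}_{k+1}(y, g) = A_k \omega_k(y, g) + a_{k+1} w_{k+1}(y, g), \quad y \in \mathbb{E}, \ g \in \mathbb{E}^*.
\end{equation*}
Inserting the worst-case multiplier from \eqref{label_096} into \eqref{label_101} and using the prescribed first two entries of $\bar{H}_{k+1}$ and $G_{k+1}$ from \eqref{label_095}, the right-hand side of \eqref{label_101} separates into the $A_k$-weighted canonical form \eqref{label_094} of $\omega_k$ and an $a_{k+1}$-weighted copy of $\bar{h}_{k+1} + \langle \bar{g}_{k+1}, y - \tau_f B^{-1}g\rangle + \frac{\tau_f}{2}\|g\|_*^2$. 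Using the definition of $\bar{h}_{k+1}$ in \eqref{label_095} and completing the square in $g$ in the latter recovers $w_{k+1}(y, g)$ exactly, while matching the quadratic-in-$y$ term $\frac{1}{2}\|y - x_0\|^2$ with the $1/\bar{A}_{k+1}$ normalizer yields the required identity.

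Once this is in place, applying Theorem~\ref{label_079} gives
\begin{equation*}
\bar{\Gamma}_{k+1} \geq \Gamma_k + \left(\frac{A_k + \bar{A}_{k+1}}{2 L_f} - \frac{a_{k+1}^2}{2}\right)\|\grad{f}(y_{k+1})\|_*^2 + \langle \grad{f}(y_{k+1}), A_k x_k + a_{k+1} v_k - \bar{A}_{k+1} y_{k+1} \rangle,
\end{equation*}
where we have already substituted $x_k = y_k - \tau_f B^{-1}\grad{f}(y_k)$ from Algorithm~\ref{label_097}, line~8, matching \eqref{label_070}. The weight rule in line~\ref{label_098}, $a_{k+1} = \frac{\tau_f}{2}(1 + \sqrt{1 + 4 L_f A_k})$, is the positive root of $L_f a_{k+1}^2 = 2A_k + a_{k+1} = A_k + \bar{A}_{k+1}$, so the quadratic coefficient vanishes; the auxiliary point update in line~5 is \eqref{label_088}, which kills the inner product. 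Thus $\bar{\Gamma}_{k+1} \geq \Gamma_k \geq 0$, which after dividing by $\bar{A}_{k+1}$ is the claim.

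The main obstacle is the bookkeeping in the key identity: the simple bound $w_{k+1}$ is naturally written as a parabola in $g$ centered at $\bar{g}_{k+1}$, whereas the memory model stores its contribution in the mixed linear-parabolic form $\bar{h}_{k+1} + \langle \bar{g}_{k+1}, y - \tau_f B^{-1}g\rangle + \frac{\tau_f}{2}\|g\|_*^2$ so that the dual quadratic term $\frac{\tau_f}{2}\|g\|_*^2$ can be factored out of $\lambda$. Reconciling these two representations (and the analogous one for $\omega_k$) is the only nontrivial step; after that, the reduction to Theorem~\ref{label_079} is mechanical and does not require revisiting the proof of that theorem.
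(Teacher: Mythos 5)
Your proposal is correct and follows essentially the same route as the paper: establish the decomposition $\bar{A}_{k+1}\bar{\omega}_{k+1}(y,g) = A_k\omega_k(y,g) + a_{k+1}w_{k+1}(y,g)$ by expanding \eqref{label_101} with the worst-case multiplier \eqref{label_096} and the model constraints \eqref{label_095}, then invoke Theorem~\ref{label_079}. You merely spell out the final sign bookkeeping (nonnegativity of the quadratic coefficient and vanishing of the inner product) that the paper leaves implicit in the phrase ``the conditions in Theorem~\ref{label_079} are thus met.''
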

\clearpage
\begin{proof}
With the constraints imposed on the model in \eqref{label_095} we expand \eqref{label_101} using \eqref{label_096} and we multiply both sides by $\bar{A}_{k + 1}$ to obtain
\begin{equation} \label{label_102}
\begin{gathered}
(A_k + a_{k + 1}) \bar{\omega}_{k + 1}(y, g) = A_k (h_k + \langle g_k, y - \tau_f B^{-1}g \rangle) \\
+ a_{k + 1}(\bar{h}_{k + 1} + \langle \grad{f}(y_{k + 1}), y - \tau_f B^{-1}g \rangle) + \frac{(A_k + a_{k + 1}) \tau_f}{2} \|g \|_*^2 + \frac{1}{2} \| y - x_0 \|_*^2 \\
\overset{\eqref{label_094}}{=} A_k \omega_k(y, g) + a_{k + 1} \left(f(y_{k + 1}) - \langle \grad{f}(y_{k + 1}), y_{k + 1} \rangle + \frac{\tau_f}{2} \| \grad{f}(y_{k + 1}) \|_*^2 + \frac{\tau_f}{2} \| g \|_*^2 \right) \\ + a_{k + 1} \langle \grad{f}(y_{k + 1}), y - \tau_f B^{-1} g \rangle
\overset{\eqref{label_075}}{=} A_k \omega_k(y, g) + a_{k + 1} w_{k + 1}(y, g) , \quad y \in \mathbb{E}, \quad g \in \mathbb{E}^*.
\end{gathered}
\end{equation}
The conditions in Theorem~\ref{label_079} are thus met and applying it completes the proof.
\end{proof}

Subprocedure $\lambda_{Q, S}(A + \tau_f)$ further ensures that $\omega_{k + 1}^* \leq \bar{\omega}_{k + 1}^*$ for all $k \geq 1$. Since the ESP in \ref{label_072} holds during the first iteration, Proposition~\ref{label_100} implies that the ESP holds for all iterations $k \geq 1$. It follows that Algorithm~\ref{label_097} also has a worst-case rate given by \eqref{label_091}, which is the best known on this problem class. Note that we allow the overestimation of $L_f$, in which case $L_f$ in \eqref{label_091} is replaced by $L \geq L_f$.

\section{Augmenting the estimate sequence}

The convergence of the original Optimized Gradient Method has been analyzed using potential functions in \cite{ref_002}. The functions, as well as the update rules themselves were obtained by manually fitting the numerical data obtained using the Performance Estimate Framework, which numerically simulates a resisting oracle. The convergence analysis itself bears a striking resemblance with the gap sequence proof structure described in \cite{ref_010,ref_005}. In this section we seek to establish the relation between the gap sequence obtained by augmenting the primal-dual estimate functions and the potential functions from \cite{ref_002}.

Recall that estimate functions contain an aggregate lower bound that should be a good approximation of the objective function \emph{at an optimal point}. Augmentation consists of forcibly \emph{closing the gap} between this approximation and the optimal value (see \cite{ref_005} for a detailed exposition).
When dealing with primal-dual estimate functions, it is not necessary to close this gap \emph{fully}. To eliminate the dual terms, which introduce the unnecessary slack $\mathcal{S}_{k + 1}$ in our analysis, we opt for \emph{primal} augmentation, yielding the augmented estimate functions as follows:
\begin{equation} \label{label_103}
\bar{\psi}_k(y, g) \defq \psi_k(y, g) + A_k \left( f(x^*) - W_k(x^*, s_k) \right), \quad y \in \mathbb{E}, \quad g \in \mathbb{E}^*, \quad k \geq 1.
\end{equation}
The augmented estimate sequence property (AESP) is thus given by
\begin{equation} \label{label_104}
A_k \left( f(y_k) - \frac{1}{2 L_f} \| \grad{f}(y_k) \|^2 \right) \leq \bar{\psi_k}^*, \quad k \geq 1.
\end{equation}
Note that while the augmentation described in \cite{ref_010} constitutes a relaxation, the ESP in \eqref{label_072} is not necessarily a stronger condition than the AESP in \eqref{label_104}.

If the new iterate is obtained through majorization-minimization using \eqref{label_070}, the AESP in \eqref{label_104} guarantees convergence as follows:
\begin{equation}
\begin{gathered}
A_k f(x_{k}) \overset{\eqref{label_070}}{\leq} A_k f(y_k) - \frac{A_k }{2 L_f} \| \grad{f}(y_k) \|^2 \overset{\eqref{label_104}}{\leq} \bar{\psi}_k^* \overset{\eqref{label_103}}{=} \psi_k^* + A_k \left(f(x^*) - W_k(x^*, s_k)\right) \\ \leq \psi_k(x^*, s_k) + A_k \left( f(x^*) - W_k(x^*, s_k) \right) \overset{\eqref{label_064}}{=} A_k f(x^*) + \frac{1}{2} \| x^* - x_0 \|^2 , \quad k \geq 1.
\end{gathered}
\end{equation}
\begin{proposition}
The augmented estimate sequence gaps
\begin{equation} \label{label_105}
\bar{\Gamma}_k \defq \bar{\psi}_k^* - A_k f(y_k) + \frac{A_k}{2 L_f} \| \grad{f}(y_k) \|^2, \quad k \geq 1.
\end{equation}
satisfy
\begin{equation}
\bar{\Gamma}_k = \mathcal{D}_0 - \mathcal{D}_k, \quad k \geq 1,
\end{equation}
where the gap terms $\mathcal{D}_k$ are defined as
\begin{equation}
\mathcal{D}_k = A_k \left( f(y_k) - \frac{1}{2 L_f} \| \grad{f}(y_k) \|^2 - f(x^*) \right) + \frac{1}{2} \| v_k - x^* \|^2, \quad k \geq 0.
\end{equation}
\end{proposition}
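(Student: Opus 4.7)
The plan is to evaluate $\bar{\psi}_k^*$ explicitly in terms of $f(x^*)$, $\|x^* - x_0\|^2$, and $\|x^* - v_k\|^2$, then substitute into the definition of $\bar{\Gamma}_k$ and recognize the terms of $\mathcal{D}_k$ and $\mathcal{D}_0$. Everything beyond that is bookkeeping.

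First I would observe that the augmentation in \eqref{label_103} is just the addition of a $(y,g)$-independent constant to $\psi_k$, so
$$\bar{\psi}_k^* = \psi_k^* + A_k\bigl(f(x^*) - W_k(x^*, s_k)\bigr).$$
To eliminate $W_k(x^*, s_k)$ I would use the defining structure \eqref{label_064} to write $A_k W_k(x^*, s_k) = \psi_k(x^*, s_k) - \tfrac{1}{2}\|x^* - x_0\|^2$, so the problem reduces to evaluating $\psi_k(x^*, s_k)$.

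Next I would invoke the canonical form \eqref{label_076} of the primal-dual estimate function, which expresses $\psi_k$ as the sum of $\psi_k^*$ and two centered quadratics in $y$ and $g$ with vertices $v_k$ and $s_k$ respectively. Evaluating at $(x^*, s_k)$ annihilates the dual quadratic and gives
$$\psi_k(x^*, s_k) = \psi_k^* + \tfrac{1}{2}\|x^* - v_k\|^2.$$
Chaining this with the previous two identities produces the clean expression
$$\bar{\psi}_k^* = A_k f(x^*) + \tfrac{1}{2}\|x^* - x_0\|^2 - \tfrac{1}{2}\|x^* - v_k\|^2.$$

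Plugging this into the definition \eqref{label_105} and grouping terms, I would rewrite
$$\bar{\Gamma}_k = \tfrac{1}{2}\|x^* - x_0\|^2 - \left[ A_k\Bigl(f(y_k) - \tfrac{1}{2L_f}\|\grad{f}(y_k)\|^2 - f(x^*)\Bigr) + \tfrac{1}{2}\|v_k - x^*\|^2\right],$$
which is exactly $\tfrac{1}{2}\|x_0 - x^*\|^2 - \mathcal{D}_k$. Finally, using the initialization conventions $A_0 = 0$ and $v_0 = x_0$, the first term is $\mathcal{D}_0$, closing the argument.

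The only substantive step is the second one: the identity $\psi_k(x^*, s_k) - \psi_k^* = \tfrac{1}{2}\|x^* - v_k\|^2$ relies on the $y$-strong convexity of $\psi_k$ with Hessian $B$ around $v_k$, which is precisely what the canonical form in \eqref{label_076} guarantees in the scheme of Subsection~\ref{label_073}. Once that quadratic structure is in hand, the rest is simple substitution.
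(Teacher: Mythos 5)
Your proof is correct and follows essentially the same route as the paper: evaluate $\psi_k(x^*, s_k)$ via the canonical form \eqref{label_076} and the definition \eqref{label_064}, deduce $\bar{\psi}_k^* = A_k f(x^*) + \tfrac{1}{2}\|x_0 - x^*\|^2 - \tfrac{1}{2}\|v_k - x^*\|^2$, and substitute into \eqref{label_105} using $A_0 = 0$, $v_0 = x_0$. Your sign convention in that intermediate expression is the correct one (the paper's displayed \eqref{label_107} has the two squared norms transposed), so no changes are needed.
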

\begin{proof}
We can obtain a simple gap sequence from the augmented estimate sequence by using the definition and canonical form of the estimate function
\begin{equation} \label{label_106}
\psi_k^* + \frac{1}{2} \| v_k - x^* \|^2 \overset{\eqref{label_076}}{=} \psi_k(x^*, s_k) \overset{\eqref{label_064}}{=} A_k W_k(x^*, s_k) + \frac{1}{2} \| x_0 - x^* \|^2, \quad k \geq 1.
\end{equation}
From \eqref{label_103} and \eqref{label_106} we have for all $k \geq 1$
\begin{equation} \label{label_107}
\bar{\psi}_k^* = \psi_k^* + A_k \left(f(x^*) - W_k(x^*, s_k)\right) = A_k f(x^*) + \frac{1}{2} \left( \| v_k - x^* \|^2 - \| x_0 - x^* \|^2 \right).
\end{equation}
Expanding \eqref{label_105} using \eqref{label_107} completes the proof.
\end{proof}
Thus, as sufficient condition for the maintenance at runtime of the AESP is to have non-increasing gap terms.
\begin{theorem} \label{label_108}
For any method that employing the update
\begin{equation} \label{label_109}
v_{k + 1} = v_k - a_{k + 1} B^{-1} \grad{f}(y_{k + 1}), \quad k \geq 0,
\end{equation} but not necessarily the ESP in \eqref{label_072}, we have that
\begin{equation}
\begin{gathered}
\mathcal{D}_{k} \geq \mathcal{D}_{k + 1} + \left( \frac{A_{k + 1}}{L_f} - \frac{a_{k + 1}^2}{2} \right) \| \grad{f}(y_{k + 1}) \|^2 \\ + \left\langle \grad{f}(y_{k + 1}), A_k \left( y_k - \frac{1}{L_f} B^{-1} \grad{f}(y_{k}) \right) + a_{k + 1} v_k - A_{k + 1} y_{k + 1} \right\rangle, \quad k \geq 0.
\end{gathered}
\end{equation}
\end{theorem}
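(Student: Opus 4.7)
My strategy mirrors the pattern used for Theorem~\ref{label_079}, with one key enhancement. I will compute $\mathcal{D}_k - \mathcal{D}_{k+1}$ directly from the definition and lower-bound each resulting piece.

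First, using $A_{k+1} = A_k + a_{k+1}$, regroup the function-value contributions as $A_k(f(y_k) - f(y_{k+1})) - a_{k+1}(f(y_{k+1}) - f(x^*))$, leaving the squared gradient terms and the primal distance difference $\frac{1}{2}\|v_k - x^*\|^2 - \frac{1}{2}\|v_{k+1} - x^*\|^2$ to be treated separately. Expanding this last difference via the update \eqref{label_109} --- exactly as in the calculation of $\mathcal{V}_{k+1}$ in \eqref{label_086}, but with $x^*$ in place of $y_{k+1}$ --- yields $a_{k+1}\langle \grad{f}(y_{k+1}), v_k - x^*\rangle - \frac{a_{k+1}^2}{2}\|\grad{f}(y_{k+1})\|^2$.

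The crucial step is to apply the combined smoothness--convexity inequality \eqref{label_011} twice. Taking $z = y_{k+1}$, $y = y_k$ reproduces the lower bound \eqref{label_087} on $A_k(f(y_k) - f(y_{k+1}))$. The new ingredient is to take $z = y_{k+1}$, $y = x^*$, and invoke $\grad{f}(x^*) = 0$ (since $x^*$ is an unconstrained minimizer of $f$), yielding
\begin{equation*}
-a_{k+1}\bigl(f(y_{k+1}) - f(x^*)\bigr) \geq a_{k+1}\langle \grad{f}(y_{k+1}), x^* - y_{k+1}\rangle + \frac{a_{k+1}}{2 L_f}\|\grad{f}(y_{k+1})\|^2 .
\end{equation*}
The extra $\frac{a_{k+1}}{2 L_f}\|\grad{f}(y_{k+1})\|^2$ is a direct consequence of $\grad{f}(x^*) = 0$ and is what drives the improved constant below.

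Assembling the pieces, the $\pm a_{k+1} x^*$ contributions inside the inner products cancel, collapsing the linear term to $\langle \grad{f}(y_{k+1}),\, A_k(y_k - \frac{1}{L_f}B^{-1}\grad{f}(y_k)) + a_{k+1} v_k - A_{k+1} y_{k+1}\rangle$, matching the claim. The $\pm \frac{A_k}{2 L_f}\|\grad{f}(y_k)\|^2$ contributions cancel. Collecting the $\|\grad{f}(y_{k+1})\|^2$ terms produces the coefficient $\frac{A_k}{2 L_f} + \frac{a_{k+1}}{2 L_f} + \frac{A_{k+1}}{2 L_f} - \frac{a_{k+1}^2}{2} = \frac{2 A_{k+1}}{2 L_f} - \frac{a_{k+1}^2}{2} = \frac{A_{k+1}}{L_f} - \frac{a_{k+1}^2}{2}$, as required. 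The main obstacle is the decision to bound $f(y_{k+1}) - f(x^*)$ using the full strengthened inequality \eqref{label_011} rather than plain convexity; this is exactly what doubles the gradient-squared coefficient relative to Theorem~\ref{label_079} and underlies the OGM-style rate improvement.
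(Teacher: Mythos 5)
Your proposal is correct and follows essentially the same route as the paper: the paper packages your two applications of \eqref{label_011} as the nonnegativity of the residual $\mathcal{R}_{k+1}(y) = f(y) - w_{k+1}(y,\grad{f}(y))$ evaluated in the weighted combination $A_k \mathcal{R}_{k+1}(y_k) + a_{k+1}\mathcal{R}_{k+1}(x^*) \geq 0$ with $\grad{f}(x^*)=0$, which is identical in content to your two inequalities, and it handles the distance term via the same \eqref{label_086}-style expansion with $x^*$ in place of $y_{k+1}$. The cancellation of the $\pm a_{k+1}x^*$ terms and the resulting coefficient $\frac{A_{k+1}}{L_f} - \frac{a_{k+1}^2}{2}$ are exactly as in the paper's derivation.
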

\begin{proof}
We closely follow the reasoning in the proof of \cite[Theorem 3]{ref_005} and define the residual $\mathcal{R}_{k + 1}(y) \defq f(y) - w_{k + 1}(y, \grad{f}(y))$. Throughout this proof we consider the index range $k \geq 0$. From the global lower bound property of $w_k$ we have $R_{k + 1}(y) \geq 0$ for all $y \in \mathbb{E}$. It follows that $A_k R_{k + 1}(y_k) + a_{k + 1} R_{k + 1}(x^*) \geq 0$. Expanding this result using $\grad{f}(x^*) = 0$ and grouping terms yields
\begin{equation} \label{label_110}
\begin{gathered}
A_k f(y_k) + a_{k + 1} f(x^*) \geq A_{k + 1} f(y_{k + 1}) + \frac{A_k}{2 L_f} \| \grad{f}(y_k) \|^2 + \frac{A_{k + 1}}{2 L_f} \| \grad{f}(y_{k + 1}) \|^2\\ + \left\langle \grad{f}(y_{k + 1}), A_k \left(y_k - \frac{1}{L_f} B^{-1} \grad{f}(y_k) \right) + a_{k + 1} x^* - A_{k + 1} y_{k + 1} \right\rangle.
\end{gathered}
\end{equation}
Using the same derivations as in \eqref{label_086} we have
\begin{equation} \label{label_111}
\frac{1}{2} \| v_{k} - x^* \|^2 - \frac{1}{2} \| v_{k + 1} - x^* \|^2 = \langle \grad{f}(y_{k + 1}), a_{k + 1} v_{k} - a_{k + 1} x^* \rangle - \frac{a^2_{k + 1}}{2} \| \grad{f}(y_{k + 1}) \|_*^2.
\end{equation}
Adding together \eqref{label_110} and \eqref{label_111} and rearranging terms completes the proof.
\end{proof}
A sufficient condition for Theorem~\ref{label_108} consists of \eqref{label_070} and \eqref{label_088} together with
\begin{equation} \label{label_112}
L_f a_{k + 1}^2 \leq 2 A_k + 2 a_{k + 1}.
\end{equation}
Taking equality in \eqref{label_112} we obtain the original online Optimized Gradient Method.

\section{Offline mode}

In Theorem~\ref{label_079} can be refactored, by expanding $\Gamma_{k + 1}$, shifting $\frac{A_{k + 1}}{2 L_f} \| f'(y_{k + 1} \|$ to the right-hand side and renaming the altered quantities thus yielding an alternative sequences $\bar{a}_k$, $\bar{y}_k$ and $\bar{\psi}_k$ for all $k \geq 1$ satisfying the following relation.

\begin{corollary} \label{label_113}
For any algorithm that employs primal dual estimate functions updated as in \eqref{label_074}, the alternative estimate function $\bar{\psi}_{k + 1}$ updated as
\begin{equation} \label{label_114}
\bar{\psi}_{k + 1}(y, g) = \psi_k(y, g) + \bar{a}_{k + 1} w_{k + 1}(y, g), \quad y \in \mathbb{E}, \quad g \in \mathbb{E}^*, \quad k \geq 0,
\end{equation}
using the simple lower bounds given by \eqref{label_075} and $\bar{a}_{k + 1} > 0$, satisfies for all $\bar{y}_{k + 1} \in \mathbb{E}$
\begin{equation} \nonumber
\begin{gathered}
\bar{\psi}_{k + 1}^* - f(\bar{y}_{k + 1}) \geq \Gamma_k + \left( \frac{A_k}{2 L_f} - \frac{\bar{a}_{k + 1}^2}{2} \right) \| \grad{f}(\bar{y}_{k + 1}) \|_*^2 \\ + \left\langle \grad{f}(\bar{y}_{k + 1}), A_k \left(y_k - \frac{1}{L_f}B^{-1}\grad{f}(y_k)\right) + \bar{a}_{k + 1} v_k - (A_k + \bar{a}_{k + 1}) \bar{y}_{k + 1} \right\rangle, \quad k \geq 0.
\end{gathered}
\end{equation}
\end{corollary}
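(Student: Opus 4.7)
The plan is to re-run the argument of Theorem~\ref{label_079} with the alternative sequences in place of the original ones, and then perform the purely algebraic refactoring indicated in the paragraph introducing the corollary. First, I would observe that (\ref{label_114}) has the same structure as (\ref{label_074}), with $a_{k+1}$ relabeled to $\bar{a}_{k+1}$ and with the role of $y_{k+1}$ inside $w_{k+1}$ taken by the free variable $\bar{y}_{k+1}$. Consequently, the identity (\ref{label_077}), the nonnegativity of the dual slack $\mathcal{S}_{k+1}$ in (\ref{label_085}), and the rewriting of $\mathcal{V}_{k+1}$ in (\ref{label_086}) all transfer verbatim under the substitutions $(\psi_{k+1},a_{k+1},y_{k+1},A_{k+1})\mapsto(\bar{\psi}_{k+1},\bar{a}_{k+1},\bar{y}_{k+1},\bar{A}_{k+1})$, where $\bar{A}_{k+1}\defq A_k+\bar{a}_{k+1}$ and the barred primal/dual minima $\bar{v}_{k+1},\bar{s}_{k+1}$ obey the analog update rules driven by $\grad{f}(\bar{y}_{k+1})$.

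Executing these substitutions inside the proof of Theorem~\ref{label_079} yields a lower bound on the barred analog
\[
\bar{\Gamma}_{k+1}\defq \bar{\psi}_{k+1}^{*}-\bar{A}_{k+1}\,f(\bar{y}_{k+1})+\tfrac{\bar{A}_{k+1}}{2L_{f}}\|\grad{f}(\bar{y}_{k+1})\|_{*}^{2},
\]
namely $\bar{\Gamma}_{k+1}\ge \Gamma_{k}+\bigl(\tfrac{A_{k}+\bar{A}_{k+1}}{2L_{f}}-\tfrac{\bar{a}_{k+1}^{2}}{2}\bigr)\|\grad{f}(\bar{y}_{k+1})\|_{*}^{2}+\langle \grad{f}(\bar{y}_{k+1}),\,A_{k}(y_{k}-\tfrac{1}{L_{f}}B^{-1}\grad{f}(y_{k}))+\bar{a}_{k+1}v_{k}-\bar{A}_{k+1}\bar{y}_{k+1}\rangle$. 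The refactoring step then consists of expanding $\bar{\Gamma}_{k+1}$ by its definition and moving the term $\tfrac{\bar{A}_{k+1}}{2L_{f}}\|\grad{f}(\bar{y}_{k+1})\|_{*}^{2}$ from the left-hand side to the right-hand side. The two gradient-squared contributions then combine as $\tfrac{A_{k}+\bar{A}_{k+1}}{2L_{f}}-\tfrac{\bar{A}_{k+1}}{2L_{f}}=\tfrac{A_{k}}{2L_{f}}$, matching exactly the coefficient displayed in the corollary; the inner-product term is unaffected by the shift, and recognizing $\bar{A}_{k+1}=A_{k}+\bar{a}_{k+1}$ reproduces the factor $(A_{k}+\bar{a}_{k+1})\bar{y}_{k+1}$ verbatim.

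I expect the main obstacle to be a bookkeeping one rather than a mathematical one: after the expand-and-shift procedure, the natural left-hand side of the derived inequality is $\bar{\psi}_{k+1}^{*}-\bar{A}_{k+1}f(\bar{y}_{k+1})$, whereas the corollary as displayed has $\bar{\psi}_{k+1}^{*}-f(\bar{y}_{k+1})$. I would read this as a compact rewriting in which the multiplier $\bar{A}_{k+1}$ is absorbed into the ``renaming the altered quantities'' clause flagged in the preceding paragraph; once this identification is made, the corollary follows line-for-line from the refactored version of Theorem~\ref{label_079} with no new analytic ingredient, since the entire analytic content (strong duality in (\ref{label_077}), the bound $\mathcal{S}_{k+1}\ge 0$, the expansion of $\mathcal{V}_{k+1}$ via the $v$-update, and the lower bound on $A_{k}(f(y_{k})-f(\bar{y}_{k+1}))$ from the primal-dual inequality) has already been established there.
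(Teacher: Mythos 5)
Your proposal is correct and follows the same route the paper itself sketches in the paragraph preceding the corollary: re-instantiate Theorem~\ref{label_079} with the barred quantities $(\bar{a}_{k+1},\bar{y}_{k+1},\bar{\psi}_{k+1},\bar{A}_{k+1})$, expand $\bar{\Gamma}_{k+1}$ by its definition, and shift $\frac{\bar{A}_{k+1}}{2L_f}\|\grad{f}(\bar{y}_{k+1})\|_*^2$ to the right-hand side, which correctly reduces the coefficient $\frac{A_k+\bar{A}_{k+1}}{2L_f}$ to $\frac{A_k}{2L_f}$. You are also right to flag that the natural left-hand side is $\bar{\psi}_{k+1}^*-\bar{A}_{k+1}f(\bar{y}_{k+1})$ rather than $\bar{\psi}_{k+1}^*-f(\bar{y}_{k+1})$; the displayed statement seems to drop the factor $\bar{A}_{k+1}=A_k+\bar{a}_{k+1}$, and the paper's subsequent convergence bound for $\bar{y}_{k+1}$ is only consistent with the $\bar{A}_{k+1}$-weighted version you derive.
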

Corollary~\ref{label_113} can be made to hold for every algorithmic state by setting the updates of $\bar{a}_k$ and $\bar{y}_k$ to $L_f \bar{a}^2_{k + 1} = A_k$, $(A_k + \bar{a}_{k + 1}) \bar{y}_{k + 1} = A_k x_k + \bar{a}_{k + 1} v_k$ for all $k \geq 0$. The convergence guarantees for $\bar{y}_{k + 1}$ are given by
\begin{equation}
f(\bar{y}_{k + 1}) - f(x^*) \leq \frac{1}{A_k + \bar{a}_{k + 1}} \bar{\psi}^*_{k + 1} - f(x^*) \leq \frac{1}{2 (A_k + \bar{a}_{k + 1})} \| x_0 - x^* \|^2, \quad k \geq 0.
\end{equation}
The oracle complexities of computing $\bar{y}_{k + 1}$ and $x_k$ are identical, yet $\bar{y}_{k + 1}$ has a slightly larger convergence guarantee.

The results in Corollary~\ref{label_113} are compatible with augmentation and Theorem~\ref{label_108} can likewise be refactored to produce an auxiliary sequence with slightly large guarantees for the same oracle complexity.

\begin{corollary} \label{label_115}
Any first-order method that maintains the state variables $A_k$, $a_k$, $y_k$ and $v_k$ satisfies for any $\bar{a}_{k + 1} > 0$ and $\bar{y}_{k + 1} \in \mathbb{E}$ the following:
\begin{equation}
\begin{gathered}
\mathcal{D}_{k} \geq \bar{A}_{k + 1} (f(\bar{y}_{k + 1}) - f(x^*)) + \frac{1}{2} \|\bar{v}_{k + 1} - x^* \|^2 + \left( \frac{\bar{A}_{k + 1}}{2 L_f} - \frac{\bar{a}_{k + 1}^2}{2} \right) \| \grad{f}(\bar{y}_{k + 1}) \|^2 \\ + \left\langle \grad{f}(\bar{y}_{k + 1}), A_k \left(y_k - \frac{1}{L_f}B^{-1}\grad{f}(y_k)\right) + \bar{a}_{k + 1} v_k - \bar{A}_{k + 1} y_{k + 1} \right\rangle, \quad k \geq 0,
\end{gathered}
\end{equation}
where $\bar{A}_{k + 1} \defq A_k + \bar{a}_{k + 1}$ and $\bar{v}_{k + 1} \defq v_k - \bar{a}_{k + 1} \grad{f}(\bar{y}_{k + 1})$.
\end{corollary}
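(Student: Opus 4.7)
The plan is to mirror the proof of Theorem~\ref{label_108} with the barred symbols $\bar{a}_{k+1}$, $\bar{y}_{k+1}$ and $\bar{v}_{k+1} \defq v_k - \bar{a}_{k+1} B^{-1}\grad{f}(\bar{y}_{k+1})$ substituted for $a_{k+1}$, $y_{k+1}$ and $v_{k+1}$. Crucially, the argument in Theorem~\ref{label_108} never uses that $y_{k+1}$ or $v_{k+1}$ is produced by any particular algorithmic rule; it only uses (i) that $w_{k+1}(\cdot,\grad{f}(\cdot))$ is a global lower bound on $f$ and (ii) the definition $v_{k+1}=v_k-a_{k+1}B^{-1}\grad{f}(y_{k+1})$. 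Both facts transfer verbatim when we replace the unbarred symbols by their barred counterparts, since $\bar{w}_{k+1}$ is simply the simple primal-dual bound \eqref{label_075} built at the arbitrary point $\bar{y}_{k+1}$ and $\bar{v}_{k+1}$ is defined by the same linear step.

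First I would introduce the residual $\mathcal{R}_{\bar{k}+1}(y)\defq f(y)-\bar{w}_{k+1}(y,\grad{f}(y))$, which is pointwise nonnegative, and combine its values at $y_k$ (the genuine algorithmic iterate) and at $x^*$ weighted by $A_k$ and $\bar{a}_{k+1}$ respectively. Expanding and exploiting $\grad{f}(x^*)=0$ reproduces the analogue of \eqref{label_110}:
\begin{equation*}
\begin{gathered}
A_k f(y_k)+\bar{a}_{k+1}f(x^*)\geq \bar{A}_{k+1}f(\bar{y}_{k+1})+\tfrac{A_k}{2L_f}\|\grad{f}(y_k)\|_*^2+\tfrac{\bar{A}_{k+1}}{2L_f}\|\grad{f}(\bar{y}_{k+1})\|_*^2\\
+\left\langle \grad{f}(\bar{y}_{k+1}),\,A_k y_k-\tfrac{A_k}{L_f}B^{-1}\grad{f}(y_k)+\bar{a}_{k+1}x^*-\bar{A}_{k+1}\bar{y}_{k+1}\right\rangle.
\end{gathered}
\end{equation*}
Then I would mimic the $\mathcal{V}$-identity \eqref{label_086}/\eqref{label_111} using the definition of $\bar{v}_{k+1}$, giving
\begin{equation*}
\tfrac{1}{2}\|v_k-x^*\|^2-\tfrac{1}{2}\|\bar{v}_{k+1}-x^*\|^2=\langle\grad{f}(\bar{y}_{k+1}),\bar{a}_{k+1}(v_k-x^*)\rangle-\tfrac{\bar{a}_{k+1}^2}{2}\|\grad{f}(\bar{y}_{k+1})\|_*^2.
\end{equation*}

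Adding these two relations cancels the $\bar{a}_{k+1}\langle\grad{f}(\bar{y}_{k+1}),x^*\rangle$ terms, turning $v_k$ into the linear combination $\bar{a}_{k+1}v_k-\bar{A}_{k+1}\bar{y}_{k+1}$ inside the inner product. Transporting $A_k f(x^*)$ and $\tfrac{A_k}{2L_f}\|\grad{f}(y_k)\|_*^2$ to the left assembles exactly $\mathcal{D}_k$, while on the right the terms $\bar{A}_{k+1}(f(\bar{y}_{k+1})-f(x^*))+\tfrac{1}{2}\|\bar{v}_{k+1}-x^*\|^2$ appear together with the leftover quadratic $\big(\tfrac{\bar{A}_{k+1}}{2L_f}-\tfrac{\bar{a}_{k+1}^2}{2}\big)\|\grad{f}(\bar{y}_{k+1})\|_*^2$. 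The essential refactoring step is to \emph{not} absorb this leftover into a next-step gap $\mathcal{D}_{\bar{k}+1}$ (as done in Theorem~\ref{label_108}), but to leave it exposed on the right-hand side; this is precisely what allows $\bar{a}_{k+1}$ and $\bar{y}_{k+1}$ to be chosen independently of the algorithmic state.

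The main obstacle is purely bookkeeping: ensuring the cross terms from the two identities cancel correctly against the $x^*$ factor and that the $\tfrac{A_k}{2L_f}\|\grad{f}(y_k)\|_*^2$ slack is merged into $\mathcal{D}_k$ rather than left dangling. There is no new analytic content beyond Theorem~\ref{label_108} — the subtlety is simply that $\bar{y}_{k+1}$ is a virtual test point, so one must resist the temptation to invoke any descent inequality of the form \eqref{label_070} or \eqref{label_071} at $\bar{y}_{k+1}$; only the $\bar{w}_{k+1}\leq f$ property is available, and that suffices.
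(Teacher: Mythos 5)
Your proposal is correct and follows exactly the route the paper intends: Corollary~\ref{label_115} is stated as a refactoring of Theorem~\ref{label_108}, and you carry out that refactoring explicitly --- reusing the residual nonnegativity $A_k\mathcal{R}(y_k)+\bar{a}_{k+1}\mathcal{R}(x^*)\ge 0$ with $\grad{f}(x^*)=0$, the three-point identity for $\bar{v}_{k+1}$, and then leaving the $\bigl(\tfrac{\bar{A}_{k+1}}{2L_f}-\tfrac{\bar{a}_{k+1}^2}{2}\bigr)\|\grad{f}(\bar{y}_{k+1})\|_*^2$ term exposed rather than folding it into a next-step gap. Your remark that only the lower-bound property of $\bar{w}_{k+1}$ (and no descent step at the virtual point $\bar{y}_{k+1}$) is needed is exactly the right observation, and your inclusion of $B^{-1}$ in the definition of $\bar{v}_{k+1}$ correctly repairs a small typo in the statement.
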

Corollary~\ref{label_115} can be easily upheld regardless of the algorithmic state if $a_{k + 1}$ and $\bar{y}_{k + 1}$ are obtained using a fixed step of the Fast Gradient Method, namely $L_f \bar{a}^2_{k + 1} = \bar{A}_{k + 1}$ and $\bar{A}_{k + 1} \bar{y}_{k + 1} = A_k x_k + \bar{a}_{k + 1} v_k$. The oracle complexity of computing $\bar{y}_{k + 1}$ is equal to that of $x_k$ in this case and, if all previous iterations are OGM iterations, by virtue of Theorem~\ref{label_108} we have that
\begin{equation}
f(\bar{y}_{k + 1}) - f(x^*) \leq \frac{1}{\bar{A}_{k + 1}} \mathcal{D}_k \leq \frac{1}{\bar{A}_{k + 1}} \mathcal{D}_0 = \frac{1}{2 (A_k + \bar{a}_{k + 1})} \| x_0 - x^* \|^2, \quad k \geq 0.
\end{equation}
In both our estimate sequence based algorithm and offline OGM, the above gains in computational complexity hold only in offline mode, i.e., when the total number of iterations is known in advance. In online mode, it is necessary to compute at every iteration $k \geq 0$ a stopping criterion involving $f(\bar{y}_{k + 1})$. This additional point is only used for this purpose, because we need to compute $y_{k + 1}$ to advance the algorithm further. The online algorithms can efficiently reuse the oracle information, as we shall elaborate in Section~\ref{label_116}. When running the algorithms for hundreds of iterations, the overhead of the offline mode eclipses the gains, amounting to no more than one iteration. For this reason, we will only consider the online methods in our simulations.

\section{Numerical results} \label{label_116}

To showcase the importance of our new bound and of the methods employing it, we have performed simulations on two particularly difficult optimization problems: a very ill-conditioned quadratic problem (QUAD) and a logistic regression problem with sparse design (LRSP). These problems notably lack additional structure, such as quadratic functional growth~\cite{ref_016}, that could be exploited by fixed point schemes.

The objective and gradient in QUAD are given by
\begin{equation}
f(x) = \frac{1}{2} \langle x, A x \rangle, \quad \grad{f}(x) = A x,
\end{equation}
respectively, where matrix $A$ is diagonal of size $n = 1000$ with the diagonal entries given by $\sigma_i = \sin^2\left(\frac{\pi i}{2 n}\right)$ for $i = 1, ... , n$. The starting point $x_0$ was selected as to have $(x_0)_i = 1 / \sigma_i$, $i \in \{1, ...,n\}$. Even though the matrix $A$ is sparse, we have resorted to a dense matrix implementation because an equivalent problem can be obtained by means of rotation, or more generally by applying an isometry, whereby the matrix representation becomes necessarily dense. Note that when such a transform is unknown to the algorithm, the problem of reversing it with the aim of recovering the original sparse matrix entails a far larger computational effort than the one of solving the original problem with a dense matrix implementation.

In LRSP the oracle functions are, respectively, given by
\begin{equation}
f(x) = \mathcal{I}(A x) - \langle y, A x \rangle, \quad \grad{f}(x) = A^T (\mathcal{L}(A x) - y),
\end{equation}
where $A$ is an $m = 10000$ by $n = 2000$ matrix, $y$ is a vector of size $m$. The sum softplus function $\mathcal{I}(x)$ and the element-wise logistic function $\mathcal{L}(x)$ are defined as
\begin{equation}
\mathcal{I}(x) = \displaystyle \sum_{i = 1}^{m} \log(1 + e^{-x_i}), \quad (\mathcal{L}(x))_i = \frac{1}{1 + e^{-x_i}}, \quad i \in \{1, ...,m\}.
\end{equation}
Here the matrix $A$ is sparse, with only $0.1\%$ of elements being non-zero, themselves sampled from the standard normal distribution. The values $y_i$ are sampled from $\{0, 1\}$ using the probability distribution $\mathbb{P}(Y_i = 1) = \mathcal{L}(A x_0)_i$, $i \in \{1, ...,m\}$. In this case we have resorted to a sparse implementation of the matrix $A$ to keep running times of the same magnitude as those in QUAD.

Also concerning oracle implementation, we have used the fact that $f(x)$ and $\grad{f}(x)$ share subexpressions on both problems. Thus, after a call to $\grad{f}(y_k)$ is made, the computational cost of $f(y_k)$ becomes negligible in QUAD and very low in LRSP.

The optimum point has the closed form expression $x^* = 0$ in QUAD. To obtain an estimate of the LRSP optimum we had to run Nesterov's original FGM for $10000$ iterations.

We have tested the same collection of algorithms on both problems. Our benchmark contains the original FGM, the online version of OGM (with all iterations using the same updates), the original GMM~\cite{ref_021}, our Improved Gradient Method with Memory (IGMM), the Accelerated Gradient Method with Memory (AGMM) equipped with line-search \cite{ref_006,ref_008} and our Optimized Gradient Method with Memory. The methods with memory are tested using a bundle size increasing exponentially from $1$ to $256$. Note that AGMM and OGMM do not allow a bundle size of $1$. In this case, AGMM designates line-search ACGM, as described in \cite{ref_009,ref_010}, and OGMM becomes our version of online OGM with the weight update given by equality in \eqref{label_089} leading to \eqref{label_090}.

The auxiliary process $\lambda$ is implemented in GMM using an optimized form of the Frank-Wolfe method~\cite{ref_011} as specified in~\cite{ref_021}. IGMM, AGMM and OGMM use a version of the Fast Gradient Method adapted to constrained problems with smooth objectives~\cite{ref_018}. All methods with memory update the bundle using a cyclic replacement strategy (CRS), whereby the new entries displace the oldest.

The termination criteria differ between algorithms. FGM, GMM, IGMM and ACGM are stopped when $f(x_k) < \Theta$, where $\Theta = f^* + \eabs$ is a threshold value that attains the absolute function value error $\eabs$. OGM and OGMM do not call the oracle at $x_k$ and are stopped instead when $f(y_k) - \frac{1}{2 L_f} \|\grad{f}(y_k)\|_*^2 < \Theta$. Throughout our experiments we have used the standard Euclidean norm with $B = I_n$, the identity matrix.

To keep the number of iterations of the fixed-point schemes within one million and the iterations of the fastest algorithms above the maximum tested bundle size, we have selected $\erel = 10^{-4}$ for QUAD and $\erel = 10^{-3}$ for LRSP, where the relative error is defined as $\erel = \eabs / (f(x_0) - f^*)$. On both problems, we have chosen for GMM a tolerance $\delta = \eabs / 2$, as recommended by \cite{ref_021}, whereas for all other methods with memory we have selected a much lower subprocedure target tolerance of $\delta = 10^{-3} \eabs$. For AGMM and OGMM employing Newton's method for convergence guarantee adjustment, we have limited the Newton iterations to $N = 2$ and the number of inner iterations of the subprocedure $\lambda$ to $10$, establishing a limit of $20$ inner iterations per outer iteration (see also \cite{ref_008}). In the case of GMM, no limit can be placed on the number of inner iteration per each call to $\lambda$ while for IGMM we have set the limit again to $20$. All methods equipped with line-search used the standard parameter choice $r_u = 2$ and $r_d = 0.5$.

When testing our methods with memory, we first consider the scenario in which the exact value of the global Lipschitz constant is known to the algorithms ($L = L_f$). Table~\ref{label_117} lists the number of iterations until termination for the gradient methods with memory on QUAD. Column $m$ designates the bundle sizes, Outer lists the number of outer iterations while Inner stands for average number of inner iterations per outer iteration. Table~\ref{label_118} lists the corresponding running times. Here Time (s) denotes the expended wall-clock time in seconds while IT (ms) shows the average running time per outer iteration measured in milliseconds. Iterations until convergence and the corresponding running times of the methods with memory when run on LRSP, also with $L = L_f$, are listed in Tables \ref{label_119} and \ref{label_120}, respectively.

\begin{table}[h!]
\caption{Iterations until the relative accuracy $\erel = 10^{-4}$ is reached on QUAD with $L = L_f$. Outer marks the number of outer iterations while Inner stands for the average number of inner iterations per outer iteration.}
\label{label_117}
\centering \footnotesize
\begin{tabular}{|r|rr|rr|rr|rr|} \hline
m & \tcol{GMM} & \tcol{IGMM} & \tcol{AGMM} & \tcol{OGMM} \\
& Outer & Inner & Outer & Inner & Outer & Inner & Outer & Inner \\ \hline
1 & 146862 & 0.00 & 146862 & 0.00 & 1441 & 0.00 & 1273 & 0.00 \\
2 & 218988 & 0.00 & 84206 & 0.56 & 1256 & 8.56 & 1241 & 7.24 \\
4 & 383851 & 0.01 & 77300 & 0.81 & 1092 & 17.11 & 930 & 14.11 \\
8 & 249834 & 1.23 & 79251 & 0.78 & 1093 & 17.41 & 936 & 15.68 \\
16 & 225952 & 1.94 & 82145 & 0.85 & 1094 & 18.41 & 938 & 17.97 \\
32 & 224279 & 2.22 & 86235 & 0.91 & 1092 & 19.28 & 934 & 19.23 \\
64 & 267367 & 1.27 & 92720 & 0.87 & 1086 & 19.64 & 929 & 19.92 \\
128 & 247196 & 0.85 & 96528 & 0.91 & 1079 & 19.93 & 921 & 19.98 \\
256 & 209815 & 0.95 & 97708 & 0.99 & 1063 & 19.96 & 906 & 19.98 \\ \hline
\end{tabular}

\vspace{3mm}

\captionof{table}{Running time until $\erel = 10^{-4}$ is reached on QUAD with $L = L_f$. Time marks the total running time in seconds and IT the per-iteration running time in milliseconds.}
\label{label_118}
\centering \footnotesize
\begin{tabular}{|r|rr|rr|rr|rr|} \hline
m & \tcol{GMM} & \tcol{IGMM} & \tcol{AGMM} & \tcol{OGMM} \\
& Time (s) & IT (ms) & Time (s) & IT (ms) & Time (s) & IT (ms) & Time (s) & IT (ms) \\ \hline
1 & 357.28 & 2.43 & 356.51 & 2.43 & 6.83 & 4.74 & 1.52 & 1.19 \\
2 & 563.84 & 2.57 & 202.69 & 2.41 & 5.96 & 4.74 & 1.48 & 1.20 \\
4 & 1037.89 & 2.70 & 186.53 & 2.41 & 5.19 & 4.75 & 1.12 & 1.20 \\
8 & 615.14 & 2.46 & 192.92 & 2.43 & 5.21 & 4.77 & 1.14 & 1.22 \\
16 & 570.37 & 2.52 & 203.76 & 2.48 & 5.25 & 4.80 & 1.17 & 1.25 \\
32 & 590.98 & 2.64 & 222.61 & 2.58 & 5.31 & 4.86 & 1.23 & 1.31 \\
64 & 769.78 & 2.88 & 258.42 & 2.79 & 5.42 & 4.99 & 1.35 & 1.46 \\
128 & 835.52 & 3.38 & 311.96 & 3.23 & 5.71 & 5.29 & 1.68 & 1.82 \\
256 & 923.00 & 4.40 & 410.85 & 4.20 & 6.19 & 5.82 & 2.43 & 2.68 \\ \hline
\end{tabular}
\end{table}

\begin{table}[h!]
\caption{Iterations until $\erel = 10^{-3}$ is reached on LRSP with $L = L_f$}
\label{label_119}
\centering \footnotesize
\begin{tabular}{|r|rr|rr|rr|rr|} \hline
m & \tcol{GMM} & \tcol{IGMM} & \tcol{AGMM} & \tcol{OGMM} \\
& Outer & Inner & Outer & Inner & Outer & Inner & Outer & Inner \\ \hline
1 & 25311 & 0.00 & 25311 & 0.00 & 480 & 0.00 & 505 & 0.00 \\
2 & 23623 & 0.01 & 9100 & 1.16 & 369 & 11.31 & 319 & 11.00 \\
4 & 44318 & 0.02 & 8011 & 1.39 & 322 & 18.54 & 313 & 16.12 \\
8 & 43901 & 0.76 & 7519 & 1.71 & 327 & 18.53 & 320 & 16.88 \\
16 & 34805 & 1.38 & 8330 & 1.75 & 323 & 18.74 & 327 & 17.15 \\
32 & 30349 & 1.71 & 8367 & 2.06 & 316 & 19.74 & 322 & 18.79 \\
64 & 31973 & 1.22 & 8640 & 2.18 & 288 & 19.81 & 310 & 19.79 \\
128 & 29373 & 0.94 & 8968 & 2.35 & 287 & 19.81 & 305 & 19.79 \\
256 & 25355 & 1.12 & 9542 & 2.69 & 286 & 19.81 & 305 & 19.79 \\ \hline
\end{tabular}

\vspace{3mm}

\captionof{table}{Running time until $\erel = 10^{-3}$ is reached on L1LR with $L = L_f$}
\label{label_120}
\centering \footnotesize
\begin{tabular}{|r|rr|rr|rr|rr|} \hline
m & \tcol{GMM} & \tcol{IGMM} & \tcol{AGMM} & \tcol{OGMM} \\
& Time (s) & IT (ms) & Time (s) & IT (ms) & Time (s) & IT (ms) & Time (s) & IT (ms) \\ \hline
1 & 27.13 & 1.07 & 26.90 & 1.06 & 1.01 & 2.10 & 0.31 & 0.61 \\
2 & 25.65 & 1.09 & 9.77 & 1.07 & 0.78 & 2.12 & 0.20 & 0.62 \\
4 & 49.33 & 1.11 & 8.75 & 1.09 & 0.68 & 2.12 & 0.20 & 0.63 \\
8 & 51.31 & 1.17 & 8.50 & 1.13 & 0.70 & 2.13 & 0.21 & 0.64 \\
16 & 44.56 & 1.28 & 10.09 & 1.21 & 0.71 & 2.19 & 0.22 & 0.69 \\
32 & 45.59 & 1.50 & 11.43 & 1.37 & 0.72 & 2.27 & 0.25 & 0.77 \\
64 & 62.26 & 1.95 & 14.41 & 1.67 & 0.70 & 2.43 & 0.28 & 0.91 \\
128 & 83.45 & 2.84 & 20.39 & 2.27 & 0.76 & 2.66 & 0.35 & 1.15 \\
256 & 120.35 & 4.75 & 33.65 & 3.53 & 0.82 & 2.87 & 0.42 & 1.38 \\ \hline
\end{tabular}
\end{table}

We see immediately that the fixed-point methods require a computational effort to converge at least an order of magnitude higher than the accelerated methods, measured either in iterations or running time. To keep running times within reasonable limits, we exclude GMM and IGMM from further analysis.

Next, to investigate how the convergence guarantee adjustment procedure can act as a line-search substitute, we test the accelerated methods with memory this time supplied with a value overestimated by a factor of $4$ ($L = 4 L_f$). The results on QUAD in both iterations and running times are listed in Table~\ref{label_121}. The corresponding values on LRSP can be found in Table~\ref{label_122}.

\begin{table}[h!]
\caption{Iterations and running time until $\erel = 10^{-4}$ is reached on QUAD with $L = 4 L_f$}
\label{label_121}
\centering \footnotesize
\begin{tabular}{|r|rr|rr|rr|rr|} \hline
m & \multicolumn{4}{c|}{AGMM} & \multicolumn{4}{c|}{OGMM} \\
& Outer & Inner & Time (s) & IT (ms)& Outer & Inner & Time (s) & IT (ms) \\ \hline
1 & 1441 & 0.00 & 6.83 & 4.74 & 2547 & 0.00 & 3.02 & 1.18 \\
2 & 1259 & 8.63 & 5.97 & 4.75 & 1451 & 9.14 & 1.74 & 1.20 \\
4 & 1093 & 17.11 & 5.19 & 4.75 & 1451 & 11.72 & 1.74 & 1.20 \\
8 & 1094 & 17.39 & 5.22 & 4.77 & 1454 & 12.48 & 1.77 & 1.22 \\
16 & 1095 & 18.57 & 5.25 & 4.80 & 1459 & 14.27 & 1.82 & 1.24 \\
32 & 1092 & 19.25 & 5.31 & 4.86 & 1462 & 16.85 & 1.91 & 1.31 \\
64 & 1086 & 19.69 & 5.42 & 4.99 & 1464 & 18.33 & 2.09 & 1.43 \\
128 & 1075 & 19.96 & 5.69 & 5.29 & 1459 & 19.92 & 2.49 & 1.71 \\
256 & 1068 & 19.96 & 6.22 & 5.83 & 1455 & 19.93 & 3.19 & 2.19 \\ \hline
\end{tabular}

\vspace{3mm}

\captionof{table}{Iterations and running time until $\erel = 10^{-3}$ is reached on LRSP with $L = 4 L_f$}
\label{label_122}
\centering \footnotesize
\begin{tabular}{|r|rr|rr|rr|rr|} \hline
m & \multicolumn{4}{c|}{AGMM} & \multicolumn{4}{c|}{OGMM} \\
& Outer & Inner & Time (s) & IT (ms)& Outer & Inner & Time (s) & IT (ms) \\ \hline
1 & 481 & 0.00 & 1.01 & 2.10 & 1010 & 0.00 & 0.61 & 0.61 \\
2 & 368 & 10.91 & 0.78 & 2.12 & 565 & 10.31 & 0.35 & 0.62 \\
4 & 324 & 18.52 & 0.69 & 2.13 & 573 & 12.02 & 0.36 & 0.63 \\
8 & 329 & 18.48 & 0.71 & 2.15 & 577 & 13.73 & 0.37 & 0.65 \\
16 & 324 & 18.71 & 0.71 & 2.19 & 584 & 16.33 & 0.40 & 0.69 \\
32 & 313 & 19.79 & 0.71 & 2.27 & 587 & 19.09 & 0.45 & 0.77 \\
64 & 293 & 19.78 & 0.71 & 2.42 & 584 & 19.95 & 0.54 & 0.92 \\
128 & 292 & 19.78 & 0.78 & 2.66 & 585 & 19.95 & 0.70 & 1.20 \\
256 & 292 & 19.78 & 0.84 & 2.86 & 586 & 19.95 & 0.95 & 1.62 \\ \hline
\end{tabular}
\end{table}

Finally, we list the results in iterations and running times on QUAD and LRSP with either $L = L_f$ or $L = 4 L_F$ for the methods without memory in Table~\ref{label_123}.

\begin{table}[h]
\caption{Performance of the original FGM and online OGM}
\label{label_123}
\centering \footnotesize
\begin{tabular}{|l|rrr|rrr|} \hline
Problem & \multicolumn{3}{c|}{FGM} & \multicolumn{3}{c|}{OGM} \\
& Outer & Time (s) & IT (ms) & Outer & Time (s) & IT (ms) \\ \hline
QUAD ($\erel = 10^{-4}$, $L = L_f$) & 1795 & 2.16 & 1.20 & 1269 & 1.52 & 1.19 \\
QUAD ($\erel = 10^{-4}$, $L = 4 L_f$) & 3596 & 4.27 & 1.19 & 2542 & 3.01 & 1.19 \\
LRSP ($\erel = 10^{-3}$, $L = L_f$) & 711 & 0.43 & 0.61 & 502 & 0.30 & 0.61 \\
LRSP ($\erel = 10^{-3}$, $L = 4 L_f$) & 1425 & 0.87 & 0.61 & 1007 & 0.61 & 0.61 \\ \hline
\end{tabular}
\end{table}

GMM performs very poorly on all tested instances, with the number of outer iterations actually increasing with bundle size. This is consistent with the previous findings in \cite{ref_007}, suggesting that the CRS generally impedes performance on difficult problems. Interestingly, even though the mode of operation in IGMM is very similar to GMM, our bound manages to improve performance with CRS. Even so, unlike the behavior seen in \cite{ref_021}, increasing the bundle size beyond $4$ does not result in further improvements. While generally benefiting from the bundle, all tested fixed-point schemes converge very slowly even when an accurate value of $L_f$ is supplied so we turn our attention to the accelerated methods.

We first consider the non-adaptive schemes. On both difficult problems, online OGM manages to surpass fixed-step FGM in every instance and, as predicted theoretically, requires fewer outer iterations than OGMM with $m = 1$, although the difference is never greater than $0.6 \%$. This discrepancy is not even discernible when the convergence speed is measured in wall-clock time. Providing an inaccurate estimate of $L_f$ impacts the performance of FGM and OGM according to the worst-case bounds: a four-fold overestimate approximately doubles (the ratio actually ranges between $2.003$ and $2.006$) the number of outer iterations.

When $L = L_f$, both accelerated methods with memory manage to reduce the number of outer iterations by around a quarter for a bundle of size $m = 4$ on both QUAD and LRSP. Increasing the bundle beyond this level results only in a moderate decrease in outer iterations at the expense of increased running times. When $m = 4$, OGMM converges slightly faster in iterations than AGMM. The running times of OGMM are less than a third of those of AGMM, the discrepancy stemming from the per outer iteration running times. This difference arises from the different adaptive mechanisms employed by the two methods. Our realistic oracle implementation allows the simultaneous computation of function value and gradient \emph{at the same point}, with the additional cost incurred by the function value being negligible. OGMM calls the oracle only at the points $y_k$, using the pair $(f(y_k), \grad{f}(y_k))$ to update the model and adjust the convergence guarantees without the need for additional oracle calls, whereas AGMM has a line-search procedure that also requires the computation of $f(x_k)$. The backtracking line-search of AGMM has a high failure rate with an average of one failure per outer iteration. Every failure entails an additional call to the oracle at \emph{both} $y_k$ and $x_k$, explaining the high per-iteration cost of AGMM.

When the methods are supplied with an inaccurate estimate $L = 4 L_f$, the performance of AGMM is virtually unaffected. This is to be expected considering the fully adaptive nature of its line-search procedure. The improved bound employed by OGMM relies on the value of $L_f$ being known, and the convergence guarantee adjustment procedure cannot fully compensate for this drawback. Consequently OGMM lags behind in iterations. The lag in LRSP is much larger than in QUAD, but this is explained by AGMM being able to utilize the bundle more efficiently on this problem. OGMM still manages to lead in running times because it calls the oracle in a single point at every outer iteration. AGMM gained the most using $m = 4$ but for OGMM the optimal bundle size was the very small $m = 2$. This is most likely due to the bound employed by OGMM being inaccurate and interfering with larger bundles.

For moderately sized bundles up to $m = 32$, the overhead for every method with memory was negligible in terms of the impact on per outer iteration running times on every problem instance tested.

\section{Discussion} \label{label_124}

In this work we have constructed an optimal lower bound for smooth convex objectives based solely on the information available to a black-box first-order scheme at any point: the global properties of the function including the global Lipschitz constant $L_f$ along with the oracle output at a collection of points. From the perspective of the algorithm, the bound is indistinguishable from the original objective, thus constituting a perfect interpolation of collected information.

The bound does not have a closed form, instead being an optimized function, where the objective is a point-wise maximum of primal-dual bounds that introduce the additional variable $g$. We have provided two additional equivalent forms: a dual form in which the bound is a point-wise maximum of a quadratic function parameterized by $\lambda$ with the standard simplex and a geometric interpretation wherein the bound is the lower envelope of all the simple parabolic functions dominating the piece-wise linear lower model.

Optimization algorithms employing the bound may not be able to store the entire oracle history in memory and we have investigated how to employ bounds based on the aggregation of past information. We have studied both the impact of linearly aggregating the oracle information itself as well as the aggregation of the primal-dual bounds themselves. The resulting bounds remain valid, albeit weaker, and algorithms need to balance memory capacity with bound accuracy.

We have constructed a fixed-point scheme, the Improved Gradient Method with Memory, wherein our bound constructed around an aggregate of the oracle history replaces directly the piece-wise linear bound employed in the original Gradient Method with Memory. The tighter bound did not improve the worst-case guarantees but preliminary computational results show a marked performance increase when compared to the original GMM when $L_f$ is known.

However, it is accelerated schemes that appear to utilize our new bound to its full potential. First of all, the presence of the additional variable $g$ in the primal-dual bounds allow us to introduce the closely related primal-dual estimate functions. Slightly altering the design pattern for first-order accelerated methods in \cite{ref_010,ref_005} to accommodate the additional variable and utilizing a stricter estimate sequence property based on the descent rule \eqref{label_071}, we were able to construct a method that has the same worst-case rate as the Optimized Gradient Method, the fastest currently known, even up to the proportionality constant. Compared to OGM, our method has a very important additional feature: the estimate function optimal value is maintained at every iteration. This allowed us to incorporate the convergence guarantee adjustment procedure that is a able to mitigate to a good degree the lack of a line-search procedure. Moreover, the adjustment is entirely free of oracle calls.

The primal-dual estimate functions also allow us to add memory to our method. Whereas OGM relies only on a single vector aggregate of the past oracle history, we can store an oracle history subset of arbitrary size. Simulation results show a positive correlation between bundle size and convergence speed measured in outer iterations when the correct value of the global Lipschitz constant $L_f$ is fed to the algorithm. However, the gains beyond a bundle size of $m = 4$ are very small and the overhead mitigates the performance gains when measured using wall-clock running times.

Interestingly, our method needs exactly one combined gradient/function value oracle call per iteration. When the two functions are computed concurrently, our simulations show that our method is competitive even with methods with memory endowed with line-search. Our methods remain competitive also when $L_f$ is overestimated.

Our theoretical framework cannot just be used to construct a new algorithm. Employing primal augmentation we are able to recover the original online OGM and the potential functions used in \cite{ref_002} to study its convergence. Our design procedure and analysis relies solely on basic principles. All update rules stem from a simple and intuitive estimate sequence framework without the use of computerized assistance tools as in the Performance Estimate Framework.

An open question remains: whether the estimate sequence can be used to derive the original online OGM without augmentation and whether OGM itself can be endowed with an adaptive mechanism and memory.
\clearpage


\begin{thebibliography}{10}

\bibitem{ref_001}
{\sc D.~Bertsekas}, {\em Nonlinear Programming}, Athena Scientific, 1995.

\bibitem{ref_002}
{\sc A.~d'Aspremont, D.~Scieur, and A.~Taylor}, {\em Acceleration methods},
arXiv preprint arXiv:2101.09545 [math.OC], (2021).

\bibitem{ref_003}
{\sc Y.~Drori}, {\em The exact information-based complexity of smooth convex
minimization}, Journal of Complexity, 39 (2017), pp.~1--16.

\bibitem{ref_004}
{\sc Y.~Drori and M.~Teboulle}, {\em Performance of first-order methods for
smooth convex minimization: a novel approach}, Math. Program., 145 (2014),
pp.~451--482.

\bibitem{ref_005}
{\sc M.~I. Florea}, {\em Constructing Accelerated Algorithms for Large-scale
Optimization}, PhD thesis, Aalto University, School of Electrical
Engineering, Helsinki, Finland, Oct. 2018.

\bibitem{ref_006}
{\sc M.~I. Florea}, {\em An efficient accelerated gradient method with memory
applicable to composite problems}, in 2021 International Aegean Conference on
Electrical Machines and Power Electronics (ACEMP) 2021 International
Conference on Optimization of Electrical and Electronic Equipment (OPTIM),
2021, pp.~473--480,
\href{http://dx.doi.org/10.1109/OPTIM-ACEMP50812.2021.9590072}
{doi:10.1109/OPTIM-ACEMP50812.2021.9590072}.

\bibitem{ref_007}
{\sc M.~I. Florea}, {\em Exact gradient methods with memory}, Optim. Methods
Software, 0 (2022), pp.~1--28,
\href{http://dx.doi.org/10.1080/10556788.2022.2091559}
{doi:10.1080/10556788.2022.2091559}.

\bibitem{ref_008}
{\sc M.~I. Florea}, {\em Gradient methods with memory for minimizing composite
functions}, arXiv preprint arXiv:2203.07318 [math.OC], (2022).

\bibitem{ref_009}
{\sc M.~I. Florea and S.~A. Vorobyov}, {\em An accelerated composite gradient
method for large-scale composite objective problems}, {IEEE} Trans. Signal
Process., 67 (2019), pp.~444--459.

\bibitem{ref_010}
{\sc M.~I. Florea and S.~A. Vorobyov}, {\em A generalized accelerated composite
gradient method: Uniting nesterov's fast gradient method and fista}, {IEEE}
Trans. Signal Process., 68 (2020), pp.~3033--3048,
\href{http://dx.doi.org/10.1109/TSP.2020.2988614}
{doi:10.1109/TSP.2020.2988614}.

\bibitem{ref_011}
{\sc M.~Frank and P.~Wolfe}, {\em An algorithm for quadratic programming}, Nav.
Res. Logist. Q., 3 (1956), pp.~95--110.

\bibitem{ref_012}
{\sc D.~Kim and J.~A. Fessler}, {\em Optimized first-order methods for smooth
convex minimization}, Math. Program., 159 (2016), pp.~81--107.

\bibitem{ref_013}
{\sc D.~Kim and J.~A. Fessler}, {\em On the convergence analysis of the
optimized gradient methods}, J. Optim. Theory Appl, 172 (2017), pp.~187--205.

\bibitem{ref_014}
{\sc K.~Lange}, {\em MM {O}ptimization {A}lgorithms}, Society for Industrial
and Applied Mathematics, Philadelphia, PA, USA, 2016.

\bibitem{ref_015}
{\sc R.~D.~C. Monteiro, C.~Ortiz, and B.~F. Svaiter}, {\em An adaptive
accelerated first-order method for convex optimization}, Comput. Optim.
Appl., 64 (2016), pp.~31--73.

\bibitem{ref_016}
{\sc I.~Necoara, Y.~Nesterov, and F.~Glineur}, {\em Linear convergence of first
order methods for non-strongly convex optimization}, Math. Program., Ser. A,
175 (2019), pp.~69--107, \href{http://dx.doi.org/10.1007/s10107-018-1232-1}
{doi:10.1007/s10107-018-1232-1}.

\bibitem{ref_017}
{\sc Y.~Nesterov}, {\em A method of solving a convex programming problem with
convergence rate $\mathcal{O}(1/k^2)$}, Dokl. Math., 27 (1983), pp.~372--376.

\bibitem{ref_018}
{\sc Y.~Nesterov}, {\em Introductory Lectures on Convex Optimization. Applied
Optimization, vol. 87}, Kluwer Academic Publishers, Boston, MA, 2004.

\bibitem{ref_019}
{\sc Y.~Nesterov}, {\em Smooth minimization of non-smooth functions}, Math.
Program., Ser. A, 103 (2005), pp.~127--152.

\bibitem{ref_020}
{\sc Y.~Nesterov}, {\em Gradient methods for minimizing composite functions},
Universit\'e catholique de Louvain, Center for Operations Research and
Econometrics (CORE), CORE Discussion Papers, 140 (2007),
\href{http://dx.doi.org/10.1007/s10107-012-0629-5}
{doi:10.1007/s10107-012-0629-5}.

\bibitem{ref_021}
{\sc Y.~Nesterov and M.~I. Florea}, {\em Gradient methods with memory}, Optim.
Methods Software, 37 (2022), pp.~936--953,
\href{http://dx.doi.org/10.1080/10556788.2020.1858831}
{doi:10.1080/10556788.2020.1858831}.

\bibitem{ref_022}
{\sc J.~M. Ortega and W.~C. Rheinboldt}, {\em Iterative solution of nonlinear
equations in several variables}, Society for Industrial and Applied
Mathematics, Philadelphia, PA, USA, 1970.

\bibitem{ref_023}
{\sc R.~T. Rockafellar}, {\em Convex Analysis}, Princeton University Press,
Princeton, NJ, 1970.

\end{thebibliography}
\end{document}